\tikzstyle{edge} = [fill,opacity=.5,fill opacity=.5,line cap=round, line join=round, line width=50pt]
\theoremstyle{definition}
\newtheorem{proposition}{Proposition}[section]
\newtheorem{lemma}[proposition]{Lemma}
\newtheorem{theorem}[proposition]{Theorem}
\newtheorem{observation}[proposition]{Observation}
\newtheorem{definition}[proposition]{Definition}
\newtheorem{example}[proposition]{Example}
\newtheorem{corollary}[proposition]{Corollary}
\newcommand{\Count}[1]{[#1]}
\newcommand{\PatternCount}[2]{\Count{#1}({#2})} 
\newcommand{\BondedTerms}[1]{\underbracket[0.3pt][0.8pt]{#1}}
\newcommand{\VinPat}[1]{
  \foreach \g in {#1} {
    \BondedTerms{\vphantom{x}\smash{\g}}
  }
}
\newcommand{\VinCount}[1]{\Count{\VinPat{#1}}}
\newcommand{\AVinP}[2]{
  \underset{
    {\scriptscriptstyle{ \foreach \a/\b in {#2} { \a \shortrightarrow \b } }}
  }{\VinPat{#1}}
}
\newcommand{\AVinC}[2]{\Count{\AVinP{#1}{#2}}}
\newcommand{\identity}{e}
\newcommand{\MeshPattern}[4]{
  \raisebox{0.6ex}{
  \begin{tikzpicture}[scale=0.2, baseline=(current bounding box.center), #1]
    \foreach \x/\y in {#4}
      \fill[pattern=north east lines, pattern color=black!45] (\x,\y) rectangle +(1,1);
    \draw (0.01,0.01) grid (#2+0.99,#2+0.99);
    \foreach \x/\y in {#3}
      \filldraw (\x,\y) circle (5pt);
  \end{tikzpicture}}\;
}
\newcommand{\perm}[1]{\sigma_{#1}}
\newcommand{\symm}{\mathfrak{S}}
\newcommand{\length}{\ell_S}
\newcommand{\reflength}{\ell_T}
\newcommand{\variance}{V}
\DeclareMathOperator{\depth}{dp}
\DeclareMathOperator{\inversions}{Inv}
\DeclareMathOperator{\displacement}{dis}
\newcommand{\Fundamental}[1]{\Phi(#1)}
\newcommand{\SF}{\phi}
\newcommand{\SPF}{\phi'}
\title{Pattern-functions, statistics, and shallow permutations}
\author{Yosef Berman}
\address{Department of Mathematics, Graduate Center (CUNY), New York, NY, USA}
\email{berman.yosef@gmail.com}
\author[Bridget Eileen Tenner]{Bridget Eileen Tenner$^*$}
\address{Department of Mathematical Sciences, DePaul University, Chicago, IL, USA}
\email{bridget@math.depaul.edu}
\thanks{$^*$Research partially supported by NSF Grant DMS-2054436 and Simons Foundation Collaboration Grant for Mathematicians 277603.}
\keywords{}%
\subjclass[2020]{Primary: 05A05; %permutations, words, matrices
  Secondary: 20F55, %reflection and Coxeter groups (group-theoretic aspects)
  05A15%exact enumeration problems, generating functions
}
\begin{document}
\begin{abstract}
We study relationships between permutation statistics and pattern-functions, counting the number of times particular patterns occur in a permutation. This allows us to write several familiar statistics as linear combinations of pattern counts, both in terms of a permutation and in terms of its image under the fundamental bijection. We use these enumerations to resolve the question of characterizing so-called ``shallow'' permutations, whose depth (equivalently, disarray/displacement) is minimal with respect to length and reflection length. We present this characterization in several ways, including vincular patterns, mesh patterns, and a new object that we call ``arrow patterns.'' Furthermore, we specialize to characterizing and enumerating shallow involutions and shallow cycles, encountering the Motzkin and large Schr\"oder numbers, respectively.
\end{abstract}
\maketitle

\section{Introduction}

Much of the research about permutation patterns has focused on understanding
(characterizing, enumerating, and so on) the permutations that avoid given sets
of patterns. In this work, we take the study of permutation patterns in a
different direction, enumerating the occurrences of a pattern in a permutation
(that is, not just classifying zero/nonzero), and we demonstrate the utility of
this refinement of the standard pattern containment question. 
The general idea of this perspective has also been taken in \cite{bona 1997, bona 1998, burstein, gaetz gao, gaetz ryba, noonan, noonan zeilberger, robertson, robertson wilf zeilberger}, but our approach is rather different from the majority of those works. 
In particular, we explore relationships between certain permutation statistics
and the number of times particular patterns occur in a permutation.  This
appreciation for the quantity of pattern occurrences extends recent work
\cite{ babson steingrimsson, branden claesson, tenner repetition, tenner range-rep}
and suggests that further analyses in this vein may be similarly
fruitful.  

\begin{definition}
Following terminology from \cite{babson steingrimsson}, a
\emph{pattern-function} is a function that can be written as a linear
combination of pattern counts. A pattern-function whose patterns contain no
more than $d$ symbols is a \emph{$d$-function}.
\end{definition}

We demonstrate the value of pattern-functions by using them to calculate the 
statistics studied by Diaconis and Graham in \cite{diaconis graham}.  This will
involve the permutation and, in some cases, a detour via the fundamental bijection.
After establishing this, we use those pattern-functions to answer an open question of
Petersen and the second author about so-called ``shallow'' permutations
\cite{petersen tenner} (equivalently, \cite[Question 3]{diaconis graham}), and we go on to examine special cases of that result more closely.

We will use this type of pattern counting to calculate established permutation
statistics, namely by writing those statistics as linear combinations of certain pattern counts.
Our attention in this paper is focused on 
the four permutation statistics discussed by Diaconis and
Graham \cite{diaconis graham}: length
$$\length(\sigma),$$
reflection length 
$$\reflength(\sigma),$$
\emph{Spearman's disarray} (also called \emph{total displacement} by Knuth \cite{knuth})
\begin{equation}\label{eqn:displacement}
\displacement(\sigma) = \sum_{i = 1}^n|\sigma_i - i|,
\end{equation}
and what we call the \emph{variance} of a permutation
\begin{equation}\label{eqn:variance}
V(\sigma) = \sum_{i = 1}^n (\sigma_i - i)^2.
\end{equation}
We will write each of these four
statistics as linear combinations of pattern counts.

The disarray/displacement statistic $\displacement$ from
Equation~\eqref{eqn:displacement} is related to the \emph{depth} of permutation,
studied by Petersen and the second author \cite{petersen tenner}:
$$\depth(\sigma) = \sum_{\sigma_i > i} (\sigma_i - i).$$
In particular, $\displacement(\sigma) = 2\depth(\sigma)$.
In  \cite{petersen tenner}, they show that 
$$\depth(\sigma) \geq \frac{\length(\sigma) + \reflength(\sigma)}{2}$$ for all $\sigma$, 
and they ask for which permutations this is an equality; i.e., which permutations have minimal depth relative
to their length and reflection length. Note that this is equivalent to \cite[Question 3]{diaconis graham}.

\begin{definition}
A permutation $\sigma$ is \emph{shallow} if its depth is equal to $(\length(\sigma) + \reflength(\sigma))/2$.
\end{definition}

We will answer that question generally using a new type of pattern containment
that we call \emph{arrow patterns}, which in our particular case can also be phrased
in terms of mesh patterns.  In doing so, we give a bijection between shallow
involutions and circles with non-intersecting chords (counted by the Motzkin
numbers), and a bijection between shallow cycles
and separable permutations (counted by the large Schr\"{o}der numbers).
We also prove that a permutation is shallow if and only if its image under the 
fundamental bijection avoids the vincular patterns
$\VinPat{5,24,13}$, $\VinPat{4,25,13}$, and $\VinPat{31,42}$ (Theorem~\ref{thm:shallow criteria}).

The paper is organized as follows.
In Section~\ref{sec:definitions and context}, we introduce terminology and
notation that we will use  throughout the work. We will also make a few simple
observations that recast the familiar permutation statistics \emph{descents}
and \emph{length} in terms of pattern counts, as motivation of the work to come.
Section~\ref{sec:enumerations via pattern counts} uses pattern-functions
to calculate the reflection length, variance, and disarray/displacement of a permutation.
These are addressed in Corollary~\ref{cor:reflection length pattern-function},
Theorems~\ref{thm:variance as four patterns}, and Theorem~\ref{thm:depth
pattern} respectively.  The variance calculation is the first to
rely on the ingenuity of this pattern-function approach. 
Reflection length and disarray/displacement are calculated via pattern-functions
applied to the permutation's image under the fundamental bijection, which will motivate the work
in Section~\ref{sec:characterizing shallow permutations}.
There, we introduce arrow patterns and construct a
pattern-function for $\length$ applied to a permutation's image under the fundamental
bijection. We use this to characterize shallow permutations in
Theorem~\ref{thm:shallow criteria}.
We then apply this in Section~\ref{sec:shallow cycles and involutions} 
to characterize and count shallow involutions and cycles, now without 
requiring arrow patterns, in Corollaries~\ref{cor:motzkin involutions} and~\ref{cor:count shallow cycles}, respectively. 
We conclude with suggestions for further research.

\section{Definitions and context}\label{sec:definitions and context}

Let $\symm_n$ denote the set of permutations of $[1,n] := \{1, \ldots, n\}$. 
A permutation $\sigma \in \symm_n$ can be described in many ways, and we will be using two of these
options. The first, \emph{one-line notation}, writes $\sigma = \sigma_1 \sigma_2
\cdots \sigma_n$ as a word where $\sigma_i:= \sigma(i)$ is the image of $i$
under the map $\sigma$.  The second, \emph{cycle notation}, writes $\sigma$ as a
product of disjoint cycles, each of which describes an orbit of $\sigma$ acting
on $[1,n]$.  For example, one cycle can be written as $(\sigma^t(1) \
\sigma^{t+1}(1) \ \cdots \ \sigma^{-1}(1) \ 1 \ \sigma(1) \ \cdots \
\sigma^{t-1}(1))$ for any $t$.  One-line notation
is the framework for studying permutation patterns,
which we will define below, and
cycle notation will be relevant because our work will make extensive use of the
so-called \emph{fundamental bijection} between permutations written in one-line
notation and permutations written in cycle notation.

\begin{example}\label{ex:421365}
The permutation $421365 \in \symm_6$, written in one-line notation, is written
in cycle notation as $(143)(2)(56)$.  There are
$3 \cdot 1 \cdot 2 \cdot 3! = 36$ ways to write this permutation in cycle
notation, including $(2)(431)(65)$ and $(56)(314)(2)$, and so on.
\end{example}

The fundamental bijection (see \cite{ec1}) starts with a permutation $\sigma$
written in a ``standard representation'' of cycle notation, and maps this
bijectively to a permutation $\Fundamental{\sigma}$ in one-line notation.  The
standard representation of a permutation $\sigma$ writes each cycle with its largest element
in the leftmost position, and writes the cycles from left to right in increasing order of those
largest elements.  The standard representation of the permutation in
Example~\ref{ex:421365} is $(2)(431)(65)$.  The permutation
$\Fundamental{\sigma}$ is obtained by erasing the cycle demarcations, so $\Fundamental{421365} = 243165$.
This process is invertible by noting that every
cycle in $\sigma$ starts with a left-to-right maximum
of $\Fundamental{\sigma}$.
We will sometimes abuse terminology and refer to $\sigma$'s image under
the fundamental bijection as ``the fundamental bijection'' of $\sigma$.

Two sequences $s$ and $t$ over ordered sets are \emph{isomorphic} if, for all
$i$ and $j$, we have $s_i \leq s_j$ if and only if $t_i \leq t_j$.
We write $s \sim t$ to denote that $s$ is isomorphic to $t$.

\begin{definition}
A permutation $\sigma \in \symm_n$ \emph{contains} a \emph{$\pi$-pattern} if a
subsequence of the one-line notation of $\sigma$ is isomorphic to the one-line
notation of $\pi$. Such a subsequence is an \emph{occurrence} of the pattern
pattern $\pi$. Otherwise $\sigma$ \emph{avoids} the pattern $\pi$.
\end{definition}

\begin{example}\label{ex:pattern containment}
The permutation $421365 \in \symm_6$ contains four occurrences of the pattern
$123$: $236, 235, 136, 135$. The permutation avoids the pattern $1234$.
\end{example}

The relationship between a permutation and a pattern is typically studied as a binary question:
contain or avoid? To recognize the value of a more granular analysis, we
introduce the following terminology and notation. The latter is meant to mimic
how coefficients are extracted from polynomials via, for example, $[x^3]f(x)$.

\begin{definition}\label{defn:count}
The \emph{count} of a pattern $\pi$ in $\sigma$ is the number of occurrences of
$\pi$ in $\sigma$. This is denoted $\PatternCount{\pi}{\sigma}$.
\end{definition}

In other words, the count of $\pi$ in $\sigma$ is positive if and only if
$\sigma$ contains a $\pi$-pattern. Example~\ref{ex:pattern containment}
showed that $\PatternCount{123}{421365} = 4$ and
$\PatternCount{1234}{421365} = 0$.

Many specializations of pattern containment exist in the literature, including
vincular, bivincular, barred, and mesh (for references to these and broader
pattern-related topics, see \cite{kitaev}). Of these, the ones primarily
used in this work are vincular patterns, developed in \cite{babson steingrimsson}, in
which portions of the pattern might be required to be bonded together.
A necessary kludge to extend vincular patterns, called arrow patterns 
will be introduced in Section~\ref{sec:characterizing shallow permutations},
but we note at the end of that section that the two
particular arrow patterns we employ can be rewritten as mesh patterns.

\begin{definition}
A \emph{vincular} pattern is a permutation, in which consecutive symbols may be
bonded together.  A permutation $\sigma \in \symm_n$ contains a vincular pattern
if a subsequence of the one-line notation of $\sigma$ occurs in the appropriate
relative order, with bonded numbers appearing consecutively in $\sigma$.  The
bonds will be indicated with underbrackets; that is, if a segment $\alpha$ of
the pattern is required to appear consecutively, then we will write
$\VinPat{\alpha}$.
\end{definition}

We will typically include an underbracket on single numbers that are not bonded
to their neighbors. We do this for aesthetic reasons, and one could certainly
omit it if desired.

\begin{example}
Consider the permutation $421365$.  The vincular pattern $\VinPat{12,3}$ occurs
twice, as $136$ and $135$, and there is one occurrence of $\VinPat{123}$, as $136$.
\end{example}

When discussing an occurrence of a vincular pattern,
we will often indicate the required bonds as a sort of reminder.
So in the permutation $421365$, we could reference the occurrence $\VinPat{13,6} \sim
\VinPat{12,3}$. 
The idea of \emph{count} from Definition~\ref{defn:count} carries over to the
setting of vincular patterns.
For example, $\PatternCount{\VinPat{12,3}}{421365} = 2$ and $\PatternCount{\VinPat{123}}{421365} = 1$.

We can now make our first characterization (and a very easy one, at that) of a
classical permutation statistic in terms of pattern counts.

\begin{observation}\label{obs:descents as patterns}
For any permutation $\sigma$, the number of descents in $\sigma$ is equal
to $\VinCount{21}(\sigma)$.
\end{observation}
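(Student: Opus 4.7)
The plan is essentially to unwind both definitions and observe that they enumerate the same set of indices, so the proof is little more than a bookkeeping exercise. First I would recall that a descent of $\sigma \in \symm_n$ is an index $i \in [1,n-1]$ with $\sigma_i > \sigma_{i+1}$, and that an occurrence of the vincular pattern $\VinPat{21}$ in $\sigma$ is, by the definition just given, a subsequence of the one-line notation of $\sigma$ that is order-isomorphic to $21$ \emph{and} whose two entries are bonded, i.e.\ occupy consecutive positions in $\sigma$.

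Next I would exhibit the obvious map from descents to occurrences of $\VinPat{21}$ that sends a descent index $i$ to the pair $(\sigma_i, \sigma_{i+1})$. The bonded condition is satisfied because the positions $i$ and $i+1$ are consecutive, and the order-isomorphism to $21$ is exactly the descent inequality $\sigma_i > \sigma_{i+1}$. Conversely, any occurrence of $\VinPat{21}$ consists of two consecutive entries $\sigma_i, \sigma_{i+1}$ with $\sigma_i > \sigma_{i+1}$, which is the defining condition for $i$ to be a descent. Thus the map is a bijection between the set of descents of $\sigma$ and the set of occurrences of $\VinPat{21}$ in $\sigma$, giving equality of their cardinalities.

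There is no real obstacle here: the statement is essentially a restatement of the definition of a descent in the language of vincular pattern counts. The only thing worth being careful about is confirming that the bonding convention in $\VinPat{21}$ really does force the two entries into consecutive positions (rather than, say, merely consecutive in value), which is clear from the definition of vincular patterns given above. Once that is noted, the observation follows immediately.
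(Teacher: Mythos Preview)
Your proof is correct and is essentially what the paper intends: the observation is stated without proof in the paper, as it is immediate from the definitions of descent and of the vincular pattern $\VinPat{21}$. Your unwinding of the definitions and the bijection you exhibit are exactly the content of this triviality.
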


Our goal is to use pattern counts to enumerate a range of phenomena.  The
notation that we employ for this is similar to that introduced in
Definition~\ref{defn:count}, treating pattern counts as operators
and summing them as needed.
A simple example of this is that for any $\sigma \in \symm_n$,
values appearing in consecutive positions either increase or decrease.
There are $n-1$ pairs of consecutive positions, and so we have the pattern-function
$$\VinCount{12}(\sigma) + \VinCount{21}(\sigma) = n-1.$$
Because this is true for all $\sigma \in \symm_n$, we will write the general fact as
$$\left(\VinCount{12} + \VinCount{21}\right)(\sigma) = n-1,$$
or, when the meaning of $\sigma$ is clear, simply as
\begin{equation}\label{equation: sum of consecutive}
\VinCount{12} + \VinCount{21} = n-1.
\end{equation}

The set $\symm_n$ is a Coxeter group, and the length and reflection length of a
permutation give a notion of the permutation's complexity as a Coxeter group
element.  More precisely, let $S$ be the set of simple reflections (adjacent
transpositions) and let $T$ be the set of all reflections (conjugates of the
simple reflections).  Then the \emph{length} of a permutation $\sigma$, denoted
$\length(\sigma)$, is the minimal number of elements of $S$ needed to form a
product equalling $\sigma$ and the \emph{reflection length} of $\sigma$, denoted
$\reflength(\sigma)$, is the minimal number of elements of $T$ needed to form a
product equalling $\sigma$. That is,
\begin{align*}
\length(\sigma) &= \min\{k : \sigma = s_1 \cdots s_k \text{ for } s_i \in S\} \text{ and}\\
\reflength(\sigma) &= \min\{k : \sigma = t_1 \cdots t_k \text{ for } t_i \in T\}.
\end{align*}

It is well known that the number of inversions equals a permutation's length, which we can now write as follows.

\begin{observation}\label{obs:length as patterns}
$\length(\sigma) = \VinCount{2,1}(\sigma)$
\end{observation}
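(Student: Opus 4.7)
The plan is to first unpack the notation and then appeal to the classical Coxeter-theoretic identification of length with inversion number. Since the symbols in $\VinPat{2,1}$ are separated by a comma rather than being adjacent inside the same underbracket, no bonding is required: the pattern count $\VinCount{2,1}(\sigma)$ is simply the number of pairs $i<j$ with $\sigma_i>\sigma_j$, i.e. the classical inversion count $\inversions(\sigma)$. So the statement reduces to the familiar fact $\length(\sigma)=\inversions(\sigma)$.

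To prove the reduced statement, I would first record the effect of right-multiplication by a simple reflection $s_k=(k,k+1)\in S$: $\sigma s_k$ swaps the values in positions $k$ and $k+1$ of the one-line notation of $\sigma$, so $\inversions(\sigma s_k)=\inversions(\sigma)\pm 1$, with $+1$ when $\sigma_k<\sigma_{k+1}$ and $-1$ otherwise. Iterating, if $\sigma=s_{i_1}\cdots s_{i_k}$ then $\inversions(\sigma)\le k$; taking a minimum over all such expressions gives the inequality $\inversions(\sigma)\le \length(\sigma)$, since $\inversions(\identity)=0$ and each factor changes the inversion count by exactly one.

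For the reverse inequality, I would invoke bubble sort. Whenever $\sigma\neq\identity$ there is some $k$ with $\sigma_k>\sigma_{k+1}$, and then $\inversions(\sigma s_k)=\inversions(\sigma)-1$. Iterating, one reaches the identity in exactly $\inversions(\sigma)$ steps, producing an expression $\sigma=s_{k_1}\cdots s_{k_{\inversions(\sigma)}}$ and hence $\length(\sigma)\le\inversions(\sigma)$. Combining both inequalities yields $\length(\sigma)=\inversions(\sigma)=\VinCount{2,1}(\sigma)$.

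There is no real obstacle here; this is a textbook computation, and the only subtlety specific to this paper is making sure the reader parses $\VinCount{2,1}$ as the un-bonded (classical) pattern count rather than the consecutive one $\VinCount{21}$. I would therefore keep the write-up brief, essentially remarking on the notational convention and citing the Coxeter-theoretic result, perhaps sketching the two inequalities above for completeness and parallelism with Observation~\ref{obs:descents as patterns}.
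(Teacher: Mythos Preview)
Your proposal is correct and, in fact, goes beyond what the paper does: the paper treats this Observation as a direct restatement of the well-known identity $\length(\sigma)=|\inversions(\sigma)|$ in pattern-count notation, offering no proof at all beyond the prefatory sentence ``It is well known that the number of inversions equals a permutation's length.'' Your bubble-sort argument is a standard and perfectly valid way to justify that identity, but for the purposes of matching the paper you could simply note the notational unpacking and cite the classical result.
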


\section{Enumerations via pattern counts}\label{sec:enumerations via pattern counts}

Diaconis and Graham consider four statistics: $\length(\sigma)$,
$\reflength(\sigma)$, $\displacement(\sigma)$, and the variance $V(\sigma)$.
Observation~\ref{obs:length as patterns}
translated the first of these into the language of patterns, and our goal is to
do similarly for the other metrics.

Reflection length, like length, can be written as a pattern-function. Unfortunately, its
formulation is less clean, requiring a sum whose terms depends on $n$.
We note at the end of the section why reflection length is not
a $d$-function for any finite $d$.
To start, we use a result of Br\"and\'en and Claesson
to write the number of cycles in a permutation in terms of pattern counts in its
image under the fundamental bijection. 

\begin{proposition}[{cf.~\cite[Proposition 3]{branden claesson}}]\label{prop:branden claesson}
The number of left-to-right maxima in a permutation $\sigma$ is
$$\sum_{k \ge 1} (-1)^{k-1} \sum_{\substack{\pi \in \symm_k\\ \pi(k) = 1}}\PatternCount{\pi}{\sigma}.$$ 
\end{proposition}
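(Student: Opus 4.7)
The plan is to interpret each pattern occurrence combinatorially and then reduce to a well-known binomial identity. The key observation is that a pattern $\pi \in \symm_k$ with $\pi(k) = 1$ is precisely one whose last entry is the smallest; so an occurrence of such a $\pi$ in $\Fundamental{\sigma}$ at positions $j_1 < \cdots < j_k$ is exactly a choice of a ``base'' position $j_k$ together with $k-1$ earlier positions whose values all exceed $\Fundamental{\sigma}_{j_k}$. The specific $\pi$ is then determined by the relative order of the values at $j_1, \ldots, j_{k-1}$.

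First, I would fix a position $i$ of $\Fundamental{\sigma}$ and let $m_i$ be the number of $j < i$ with $\Fundamental{\sigma}_j > \Fundamental{\sigma}_i$; note that $i$ is a left-to-right maximum of $\Fundamental{\sigma}$ if and only if $m_i = 0$. By the observation above, for each $k \geq 1$ the number of occurrences of patterns $\pi \in \symm_k$ with $\pi(k) = 1$ whose last index is $i$ is exactly $\binom{m_i}{k-1}$, since each $(k-1)$-subset of the $m_i$ eligible prior positions gives rise to exactly one such pattern occurrence (with the pattern determined by the relative order of the chosen values).

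Next, I would swap the order of summation to obtain
\[
\sum_{k \ge 1} (-1)^{k-1} \sum_{\substack{\pi \in \symm_k\\ \pi(k) = 1}}\PatternCount{\pi}{\Fundamental{\sigma}}
= \sum_{i=1}^n \sum_{k \ge 1}(-1)^{k-1} \binom{m_i}{k-1}
= \sum_{i=1}^n \sum_{\ell \ge 0}(-1)^{\ell} \binom{m_i}{\ell}.
\]
The inner sum equals $(1-1)^{m_i}$, which is $1$ if $m_i = 0$ and $0$ otherwise. Hence the total counts precisely the positions $i$ for which $m_i = 0$, that is, the left-to-right maxima of $\Fundamental{\sigma}$.

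The proof is essentially a single combinatorial identification followed by a binomial inclusion-exclusion, and there is no serious obstacle; the only step requiring a little care is verifying that the correspondence between occurrences and pairs (base position, subset of larger earlier values) is a true bijection, i.e., that distinct pairs $(i, S)$ yield distinct pattern occurrences and that the resulting pattern $\pi$ automatically satisfies $\pi(k) = 1$. Both points follow immediately from the fact that $\Fundamental{\sigma}_i$ is strictly smaller than the values at the positions in $S$.
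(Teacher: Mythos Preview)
Your proof is correct. Note, however, that the paper does not supply its own proof of this proposition: it is quoted from Br\"and\'en and Claesson \cite{branden claesson} and used as a black box, so there is no argument in the paper to compare against. Your inclusion--exclusion argument (group occurrences by their rightmost position $i$, observe that the inner sum over $k$ collapses to $0^{m_i}$) is the standard way to verify such an identity and is essentially how one would prove the cited result directly.
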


Because the sum of a permutation's reflection length and number of cycles is equal to its size, Proposition~\ref{prop:branden claesson} allows reflection length to be written in terms of pattern counts in the fundamental bijection. 

\begin{corollary}\label{cor:reflection length pattern-function}
For $\sigma \in \symm_n$,
let $\Fundamental{\sigma}$ be the fundamental bijection of $\sigma$.
Then $$\reflength(\sigma) = n - \sum_{k \ge 1} (-1)^{k-1} \sum_{\substack{\pi \in \symm_k\\ \pi(k) = 1}}\PatternCount{\pi}{\Fundamental{\sigma}}.$$
\end{corollary}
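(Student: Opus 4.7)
The plan is to reduce the corollary to Proposition~\ref{prop:branden claesson} via two classical facts about the symmetric group. First, I would invoke the identity $\reflength(\sigma) = n - c(\sigma)$, where $c(\sigma)$ denotes the number of cycles in the disjoint cycle decomposition of $\sigma$. This is standard: a single cycle of length $m$ factors as a product of $m-1$ transpositions (for instance, $(a_1\, a_2\, \cdots\, a_m) = (a_1\, a_2)(a_2\, a_3)\cdots(a_{m-1}\, a_m)$) and cannot be written with fewer, so summing over disjoint cycles gives $\reflength(\sigma) = \sum_i (\ell_i - 1) = n - c(\sigma)$.

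Next, I would identify $c(\sigma)$ with the number of left-to-right maxima of $\Fundamental{\sigma}$. This is immediate from the construction of the fundamental bijection reviewed in Section~\ref{sec:definitions and context}: the standard representation places each cycle's largest element leftmost and orders the cycles by those leaders in increasing order, so once the cycle demarcations are erased, the cycle leaders become exactly the left-to-right maxima of $\Fundamental{\sigma}$. Conversely, every left-to-right maximum of $\Fundamental{\sigma}$ arises this way, which is precisely what makes the fundamental bijection invertible.

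Combining these two observations with Proposition~\ref{prop:branden claesson} gives
$$\reflength(\sigma) = n - c(\sigma) = n - \#\{\text{left-to-right maxima of } \Fundamental{\sigma}\},$$
and substituting the Br\"and\'en--Claesson expression for the right-hand count produces the stated formula. There is no real obstacle here; the entire argument is an assembly of Proposition~\ref{prop:branden claesson} with the two structural facts above, both of which the paper has essentially already invoked in the remark immediately preceding the corollary statement.
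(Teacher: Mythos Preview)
Your proposal is correct and matches the paper's approach exactly: the paper treats the corollary as immediate from Proposition~\ref{prop:branden claesson} together with the identity $\reflength(\sigma) + c(\sigma) = n$ (noted in the sentence just before the corollary) and the built-in fact that cycles of $\sigma$ correspond to left-to-right maxima of $\Fundamental{\sigma}$. There is nothing more to add.
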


Compared to length and reflection length, calculating a pattern-function
for variance is a much more interesting challenge. To do so, we first characterize variance in terms of the inversion set of a permutation.
Our proof uses some handy properties of summations, but the result can also be
proved inductively or by a geometric argument. 

\begin{lemma}\label{lemma:sumOfSquaresIsInversionDiffs}
For any permutation $\sigma \in \symm_n$, 
$$V(\sigma) = 2\hspace{-.1in}\sum_{(i,j) \in \inversions(\sigma)} (\sigma_i - \sigma_j).$$
\end{lemma}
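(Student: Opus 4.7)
The plan is to start from $V(\sigma) = \sum_i (\sigma_i - i)^2$, reduce it to a linear expression in the signed displacements $\sigma_i - i$, and then re-express those displacements as differences of inversion counts so that the whole sum can be reorganized as a sum over $\inversions(\sigma)$.

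First I would exploit the fact that $\sigma$ is a bijection of $[1,n]$, so $\sum_i \sigma_i^2 = \sum_i i^2$. Expanding the square in the definition of $V(\sigma)$ makes the two cubic-scale terms cancel, leaving
\[
V(\sigma) \;=\; 2\sum_i i^2 - 2\sum_i i\,\sigma_i \;=\; 2\sum_i \sigma_i(\sigma_i - i).
\]
(Equivalently one gets $-2\sum_i i(\sigma_i - i)$; either normalization will do.)

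Next I would combinatorially decompose $\sigma_i - i$. Writing $\sigma_i = \#\{j : \sigma_j \le \sigma_i\}$ and $i = \#\{j : j \le i\}$ and splitting each count by whether $j \le i$ or $j > i$, the ``$j \le i$ and $\sigma_j \le \sigma_i$'' pieces cancel, leaving
\[
\sigma_i - i \;=\; \#\{j > i : \sigma_j < \sigma_i\} \;-\; \#\{j < i : \sigma_j > \sigma_i\} \;=:\; a_i - b_i.
\]
This is the heart of the proof: $a_i$ counts the inversions of $\sigma$ with first coordinate $i$, and $b_i$ counts the inversions with second coordinate $i$.

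Finally I would substitute and swap summations. Inserting $\sigma_i - i = a_i - b_i$,
\[
V(\sigma) \;=\; 2\sum_i \sigma_i a_i \;-\; 2\sum_i \sigma_i b_i.
\]
The first sum, by definition of $a_i$, is $\sum_{(i,j)\in\inversions(\sigma)} \sigma_i$. For the second sum, swapping the order of summation (so the outer index becomes the second coordinate of the inversion pair) yields $\sum_{(i,j) \in \inversions(\sigma)} \sigma_j$. Subtracting gives the desired identity $V(\sigma) = 2\sum_{(i,j)\in\inversions(\sigma)} (\sigma_i - \sigma_j)$.

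The only real insight is the decomposition $\sigma_i - i = a_i - b_i$; once that is in hand, every remaining step is an elementary manipulation of sums, and no induction or geometric picture is required.
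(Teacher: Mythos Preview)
Your argument is correct, and it takes a genuinely different route from the paper's. Both proofs begin the same way, using the bijectivity of $\sigma$ to collapse $V(\sigma)$ to $2\sum_i i^2 - 2\sum_i i\sigma_i$. From there the paper attacks the \emph{right-hand side}: it rewrites $2\sum_{(i,j)\in\inversions(\sigma)}(\sigma_i-\sigma_j)$ as $\sum_{i<j}\big(\sigma_i-\sigma_j+|\sigma_i-\sigma_j|\big)$ via the identity $x-y+|x-y|=2\max(x-y,0)$, expands into three sums over all pairs, and then matches the result to the left-hand side using the closed forms for $\sum i$ and $\sum i^2$. Your approach instead stays on the \emph{left-hand side}: the decomposition $\sigma_i - i = a_i - b_i$ into ``inversions starting at $i$'' minus ``inversions ending at $i$'' lets you pass directly to a sum over $\inversions(\sigma)$ by a single reindexing, with no absolute values and no appeal to $\sum i^2 = n(n+1)(2n+1)/6$. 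Your proof is a bit shorter and more combinatorial; the paper's has the minor side benefit of evaluating $\sum_{i<j}|\sigma_i-\sigma_j|$ along the way.
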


\begin{proof}
Because $\sigma$ is a permutation, we can write
$$V(\sigma) = \sum_{i = 1}^{n} (\sigma_i - i)^2 = \sum_{i=1}^n\left(\sigma_i^2 - 2i\sigma_i + i^2\right) = 2\sum_{i=1}^ni^2 - 2\sum_{i=1}^ni\sigma_i.$$
To simplify the right-hand side of the proposition, we note that
$$x-y + |x-y| = \begin{cases} 
2(x-y) & \text{if } x > y, \text{ and} \\ 
0 & \text{otherwise.} 
\end{cases}$$
Thus
\begin{align}
2\hspace{-.1in}\sum_{(i,j) \in \inversions(\sigma)} (\sigma_{i} - \sigma_{j}) &= \sum_{i<j} \left(\sigma_i - \sigma_j + |\sigma_i-\sigma_j|\right)\nonumber\\
&= \sum_{i<j} \sigma_i - \sum_{i<j} \sigma_j + \sum_{i<j} |\sigma_i-\sigma_j|\nonumber\\
&= \sum_{i=1}^n \sigma_i(n-i) - \sum_{i=1}^n \sigma_i(i-1) + \sum_{i<j} |\sigma_i-\sigma_j|\nonumber\\
&= (n+1)\sum_{i=1}^n i - 2\sum_{i = 1}^n i\sigma_i + \sum_{i<j} |\sigma_i-\sigma_j|.\label{eqn:expanding inversions}
\end{align}

The sets $\{\{i, j\} : 1 \leq i < j \leq n \}$ and
$\{\{\sigma_i, \sigma_j\} : 1 \leq i < j \leq n \}$
are equal, and we can partition the $\binom{n}{2}$ pairs $\{x,y\}$ in either of them by the difference $|x-y|$. Therefore
\begin{equation}\label{eqn:expanding inversions part 2}
\sum_{i<j} |\sigma_i-\sigma_j| = \sum_{i<j}(j-i) = \sum_{\delta = 1}^n \delta(n-\delta) = n\sum_{\delta=1}^n\delta - \sum_{\delta=1}^n\delta^2.
\end{equation}
Combining Equations~\eqref{eqn:expanding inversions} and~\eqref{eqn:expanding inversions part 2} yields
$$2\hspace{-.1in}\sum_{(i,j) \in \inversions(\sigma)} (\sigma_i - \sigma_j) = (2n+1)\sum_{i=1}^n i - 2\sum_{i=1}^ni\sigma_i - \sum_{i=1}^ni^2.$$
Summation identities for $\sum i$ and $\sum i^2$ complete the proof.
\end{proof}

We are now ready to write the variance $V(\sigma)$ as a $3$-function.

\begin{theorem}\label{thm:variance as four patterns}
$V(\sigma) = 2(\Count{21} + \Count{231} + \Count{312} + \Count{321})(\sigma)$.
\end{theorem}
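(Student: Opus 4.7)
The plan is to start from Lemma~\ref{lemma:sumOfSquaresIsInversionDiffs}, which reduces the theorem to showing
$$\sum_{(i,j) \in \inversions(\sigma)} (\sigma_i - \sigma_j) = (\Count{21} + \Count{231} + \Count{312} + \Count{321})(\sigma).$$
The key observation is that each summand $\sigma_i - \sigma_j$ admits a combinatorial interpretation via the permutation property: the values $\sigma_j+1, \sigma_j+2, \ldots, \sigma_i$ form an interval of $\sigma_i - \sigma_j$ integers, each of which is $\sigma_k$ for a unique position $k$; exactly one such $k$ is $i$ itself. Hence
$$\sigma_i - \sigma_j \;=\; 1 + \bigl|\{k : k \neq i, j,\ \sigma_j < \sigma_k < \sigma_i\}\bigr|.$$

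Summing this identity over all $(i,j) \in \inversions(\sigma)$, the constant-$1$ contributions produce $\Count{21}(\sigma)$ (by Observation~\ref{obs:length as patterns}), and the remaining contribution is the number of triples $(i,j,k)$ with $i<j$, $k \neq i,j$, and $\sigma_j < \sigma_k < \sigma_i$. Note that the inversion condition $\sigma_i > \sigma_j$ is automatic from the value constraint on $\sigma_k$.

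The crux is to split these triples by the position of $k$ relative to $i$ and $j$, and recognize each case as a 3-pattern. If $k<i<j$, then on positions $(k,i,j)$ the values satisfy $\sigma_j < \sigma_k < \sigma_i$, giving the pattern $231$. If $i<k<j$, then on positions $(i,k,j)$ the same value inequality yields $321$. If $i<j<k$, then on positions $(i,j,k)$ it yields $312$. Each occurrence of $231$, $321$, or $312$ in $\sigma$ arises from exactly one such triple, so the three cases contribute $\Count{231}(\sigma)$, $\Count{321}(\sigma)$, and $\Count{312}(\sigma)$ respectively, completing the proof.

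There is no serious obstacle here; the main thing to get right is the bookkeeping in the case analysis, making sure each 3-pattern is identified by the correct permutation of its values and that the three cases are exhaustive and disjoint. Multiplying by $2$ as in the lemma gives the stated formula.
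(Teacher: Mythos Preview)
Your proof is correct and follows essentially the same route as the paper's: both start from Lemma~\ref{lemma:sumOfSquaresIsInversionDiffs}, account for the ``$1$'' in each summand as the $\Count{21}$ term, and then classify the intermediate value (your $\sigma_k$, the paper's $a$) by the position of $k=\sigma^{-1}(a)$ relative to $i$ and $j$ to obtain the three $3$-patterns. Your version simply spells out the case analysis more explicitly.
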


\begin{proof}
Lemma~\ref{lemma:sumOfSquaresIsInversionDiffs} gave
$$V(\sigma) = 2\hspace{-.1in}\sum_{(i,j) \in \inversions(\sigma)} (\sigma_i - \sigma_j),$$
and we will show that this sum is equal to
$$(\Count{21} + \Count{231} + \Count{312} + \Count{321})(\sigma).$$
Inversions are counted by $\Count{21}$. For each inversion $(i,j)$,
there are $\sigma_i-\sigma_j-1$ numbers $a$ such that
$\sigma_j < a < \sigma_i$.
Each of those numbers is counted exactly once by the sum $(\Count{21} + \Count{231} + \Count{312} + \Count{321})(\sigma)$. More specifically, the position of $2$ relative to $3$ and $1$ in the pattern corresponds to the value $\sigma^{-1}(a)$ relative to $i$ and $j$.
\end{proof}

Similar to reflection length (and unlike length and variance), our
pattern-function for the disarray/displacement of a permutation is in terms of the
permutation's image under the fundamental bijection.

\begin{theorem}\label{thm:depth pattern} 
For any permutation $\sigma$, we have
$\displacement(\sigma) = 2\left(\VinCount{21} + \VinCount{2,31} + \VinCount{31,2}\right)(\Fundamental{\sigma})$.
\end{theorem}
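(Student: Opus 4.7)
The plan is first to simplify the right-hand side of the identity, and then to reduce the theorem to a cycle-wise statement using the structure of the fundamental bijection. Set $\tau := \Fundamental{\sigma}$. For each descent position $d$ of $\tau$ (so $\tau_d > \tau_{d+1}$), the three terms in the theorem contribute as follows: the descent itself contributes $1$ to $\VinCount{21}$; the bonded pair $\tau_d \tau_{d+1}$ plays the role of the ``$31$''-block in $\VinPat{2,31}$ paired with any earlier position $i < d$ with $\tau_{d+1} < \tau_i < \tau_d$, and analogously for $\VinPat{31,2}$ with a later position $j > d+1$. Because $\tau$ is a permutation, every integer in the open interval $(\tau_{d+1},\tau_d)$ occupies a unique position, automatically distinct from $d$ and $d+1$. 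So these three contributions sum to $1 + (\tau_d - \tau_{d+1} - 1) = \tau_d - \tau_{d+1}$, and the theorem reduces (after invoking $\displacement = 2\depth$) to
$$\depth(\sigma) \;=\; \sum_{d:\ \tau_d > \tau_{d+1}} (\tau_d - \tau_{d+1}).$$

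Next, I would split $\tau$ into the consecutive blocks $\tau = M_1 w_1 M_2 w_2 \cdots M_c w_c$ coming from the standard cycle representation of $\sigma$, where each $M_j$ is the maximum of its cycle (equivalently, a left-to-right maximum of $\tau$) and $w_j$ is the remainder of the $j$-th cycle. Because each $M_{j+1}$ exceeds every entry before it, no descent of $\tau$ straddles the boundary between two blocks, so it suffices to verify the identity block by block. Within the block arising from a cycle $(M, b_1, \ldots, b_k)$ we have $\sigma(M)=b_1$, $\sigma(b_i)=b_{i+1}$ for $1\le i<k$, and $\sigma(b_k)=M$. Hence the depth contribution from this cycle is $(M - b_k) + \sum_{i<k,\ b_{i+1}>b_i}(b_{i+1} - b_i)$, while the descent-drop sum along the block $M\,b_1\cdots b_k$ is $(M - b_1) + \sum_{i\ge 1,\ b_i > b_{i+1}}(b_i - b_{i+1})$.

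The equality of these two expressions is an elementary telescoping: applying $\max(x,0) - \max(-x,0) = x$ termwise,
$$\sum_{i=1}^{k-1}\bigl(\max(b_{i+1}-b_i,0) - \max(b_i-b_{i+1},0)\bigr) \;=\; \sum_{i=1}^{k-1}(b_{i+1}-b_i) \;=\; b_k - b_1,$$
so the two expressions differ by $(M-b_k) - (M-b_1) + (b_k - b_1) = 0$. Multiplying through by $2$ yields the stated formula for $\displacement(\sigma)$. The main obstacle is purely bookkeeping: matching the three vincular patterns against each descent so that the integers in $(\tau_{d+1},\tau_d)$ are accounted for exactly once, handling cycle endpoints correctly (including fixed points, where $k=0$ and both sides contribute $0$), and making sure the telescoping captures the ``wrap-around'' depth jump $\sigma(b_k) = M$ at the end of each cycle block while also absorbing the unavoidable drop from $M$ to $b_1$ at its start.
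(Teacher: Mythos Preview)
Your argument is correct. Both proofs share the opening reduction: each descent $\tau_d>\tau_{d+1}$ contributes exactly $\tau_d-\tau_{d+1}$ to the pattern-function on the right, so the claim becomes $\depth(\sigma)=\sum_{d:\,\tau_d>\tau_{d+1}}(\tau_d-\tau_{d+1})$. Where you diverge from the paper is in establishing this last identity. The paper observes directly that a descent $\phi_{j-1}\phi_j$ in $\Fundamental{\sigma}$ forces $\phi_{j-1}$ and $\phi_j$ to lie in the same cycle with $\sigma(\phi_{j-1})=\phi_j$; hence descents of $\phi$ are \emph{exactly} the pairs $(i,\sigma_i)$ with $i>\sigma_i$, and the identity follows from the symmetric form $\depth(\sigma)=\sum_{i>\sigma_i}(i-\sigma_i)$ with no further computation. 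You instead split $\tau$ into cycle blocks and verify the identity per block by a telescoping comparison of ascent sums against descent sums. This is valid, but the telescoping is really a cycle-local version of the global fact $\sum_i(\sigma_i-i)=0$, which the paper invokes in one line; recognizing the bijection between descents of $\phi$ and ``drops'' of $\sigma$ lets you skip the block decomposition and the wrap-around bookkeeping entirely.
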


\begin{proof}
For ease of notation, set $\phi := \Fundamental{\sigma}$. We will actually prove an equivalent statement, that the depth
$\depth(\sigma)$ is equal to $$\left(\VinCount{21} + \VinCount{2,31} +
\VinCount{31,2}\right)(\phi).$$
The fundamental bijection guarantees that if
$\phi_{j} > \phi_{j+1}$
then $\phi_{j}$ and $\phi_{j+1}$ are in the same cycle, and
$\sigma_{\phi_{j}} = \phi_{j+1}$.

Consider the standard cycle representation of $\sigma$.  If $i > \sigma_{i}$,
then $\sigma_{i}$ is not the largest element in its cycle and hence
$\sigma_{i}$ is not written first in the cycle in standard form.  Let $j$ be
such that $i = \phi_{j-1}$, and hence $\sigma_{i} = \phi_{j}$. Then we can
write
\begin{equation}
\label{eqn:understanding depth} i - \sigma_{i} = \phi_{j-1}- \phi_{j} = 1 + \#\{x :  \phi_{j}< x < \phi_{j-1}\}.
\end{equation}
Because
$\sum_i (\sigma_{i} - i) = \sum_{i \neq \sigma_{i}} (\sigma_{i}-i) = 0$,
we have
$$\depth(\sigma) = \sum_{\sigma_{i} > i}(\sigma_{i}-i) = -\sum_{i > \sigma_{i}}(\sigma_{i}-i) = \sum_{i > \sigma_{i}}(i-\sigma_{i}).$$
Consider the
rightmost expression in Equation~\eqref{eqn:understanding depth}.  The $1$ in
that sum identifies the $\VinPat{21}$-pattern formed by
$\phi_{j-1}\phi_j = i\perm{i}$ in $\phi$.  Now consider the locations of all
$\{x :  \phi_{j}< x < \phi_{j-1}\}$ in the one-line notation for $\phi$. If
$x$ appears to the left of $\phi_{j-1}$, then the values
$\{x,\phi_{j-1},\phi_j\}$ form a $\VinPat{2,31}$-pattern in $\phi$.
Otherwise those letters form a $\VinPat{31,2}$-pattern in $\phi$. Thus
$$\sum_{i>\perm{i}} (i-\sigma_i) = \left(\VinCount{21} + \VinCount{2,31} + \VinCount{31,2}\right)(\phi).$$
\end{proof}

We close this section with an application of these formulas. 
Using \cite[Proposition 4]{babson steingrimsson}
and the linearity of expectation, the pattern-function formulation
of each of these statistics allow us to systematically determine their expected value.
For example, Observation~\ref{obs:length as patterns} gives us
$E\left[\length\right] =\frac{n^2 - n}{4}$,
Theorem~\ref{thm:variance as four patterns} gives us
$E\left[\variance\right] = \frac{n^3-n}{6}$,
and Theorem~\ref{thm:depth pattern} gives us
$E\left[\displacement\right] = 2E\left[\depth\right] = \frac{n^2 - 1}{3}$.
We could similarly find a polynomial in $n$ for the expected value of any finite pattern-function.
A more exciting example, from Corollary~\ref{cor:reflection length pattern-function}, first requires a summation formula.

\begin{proposition}[{\cite[\S6.4]{graham knuth patashnik}}]
For $n > 0$, we have
$$\sum_{k = 1}^{n}(-1)^{k-1}\binom{n}{k}\frac{1}{k} = \sum_{k = 1}^{n}\frac{1}{k},$$
the $n$th harmonic number $H_n$.
\end{proposition}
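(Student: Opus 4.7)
The plan is to prove this classical identity by induction on $n$, using Pascal's identity to relate the sum at $n+1$ to the sum at $n$ plus an explicit telescoping contribution.

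The base case $n=1$ is immediate: both sides equal $1$. For the inductive step, assume the identity holds for $n$, and split the sum at $n+1$ via $\binom{n+1}{k} = \binom{n}{k} + \binom{n}{k-1}$:
$$\sum_{k=1}^{n+1}(-1)^{k-1}\binom{n+1}{k}\frac{1}{k} = \sum_{k=1}^{n+1}(-1)^{k-1}\binom{n}{k}\frac{1}{k} + \sum_{k=1}^{n+1}(-1)^{k-1}\binom{n}{k-1}\frac{1}{k}.$$
Since $\binom{n}{n+1}=0$, the first sum equals $\sum_{k=1}^{n}(-1)^{k-1}\binom{n}{k}\frac{1}{k} = H_n$ by the inductive hypothesis. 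So the main task is to show that the second sum equals $\frac{1}{n+1}$, which is exactly the increment $H_{n+1} - H_n$.

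For the second sum, the key manipulation is the absorption identity $\binom{n}{k-1}\frac{1}{k} = \frac{1}{n+1}\binom{n+1}{k}$. Reindexing after this substitution and pulling out the prefactor reduces the second sum to $\frac{1}{n+1}\sum_{m=1}^{n+1}(-1)^{m-1}\binom{n+1}{m}$. Finally, the standard alternating binomial identity $\sum_{m=0}^{n+1}(-1)^m\binom{n+1}{m}=0$ yields $\sum_{m=1}^{n+1}(-1)^{m-1}\binom{n+1}{m}=1$, and therefore the second sum is exactly $\frac{1}{n+1}$, completing the induction.

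The proof involves no real obstacle; the only step that requires care is recognizing the absorption identity $\binom{n}{k-1}/k = \binom{n+1}{k}/(n+1)$, which turns the second sum into the alternating binomial sum. As an alternative, one could give a one-line integral proof by writing $\frac{1}{k} = \int_0^1 x^{k-1}\,dx$, interchanging sum and integral to obtain $\int_0^1 \frac{1-(1-x)^n}{x}\,dx$, and recognizing this (via the substitution $u = 1-x$ in the standard integral $H_n = \int_0^1 \frac{1-x^n}{1-x}\,dx$) as $H_n$; but the inductive argument is entirely elementary and fits the combinatorial spirit of the rest of the paper.
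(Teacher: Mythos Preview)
Your proof is correct and follows essentially the same inductive approach as the paper: compute the difference between the sum at $n+1$ and the sum at $n$ via Pascal's identity to arrive at $\sum_{k=1}^{n+1}(-1)^{k-1}\binom{n}{k-1}\tfrac{1}{k}$. In fact, the paper's printed proof stops at exactly this expression, whereas you complete the argument by applying the absorption identity $\binom{n}{k-1}/k=\binom{n+1}{k}/(n+1)$ and the alternating binomial sum to show it equals $\tfrac{1}{n+1}$.
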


From this we see $E\left[\reflength\right] = n - H_n$ which is equivalent to the
classic result that the expected number of cycles in a uniformly selected
permutation is the $n$th harmonic number.
Furthermore, there cannot be a
finite length pattern-function for either reflection length or the number of cycles, since
their expected value is not a polynomial in $n$.

\section{Characterizing shallow permutations}\label{sec:characterizing shallow permutations}

In order to address the question of \cite{petersen tenner} to characterize
shallow permutations, it is necessary to consider length, reflection length,
and depth simultaneously. Thus we want the pattern-functions for those
statistics to be comparable; namely, all in terms of the same object. While
Observation~\ref{obs:length as patterns} computes length as a simple
pattern-function of the permutation,
Corollary~\ref{cor:reflection length pattern-function}
writes reflection length as a pattern-function of the
fundamental bijection, as does Theorem~\ref{thm:depth pattern} for depth. Thus
it is necessary to recalculate length in terms of pattern counts in the
fundamental bijection, which we do in Theorem~\ref{thm:length of perm}.

The expected values of length and depth are polynomial in $n$. The expected
value of reflection length is not, so neither is the expected value of
$\depth(\sigma)-(\length(\sigma)+\reflength(\sigma))/2$.
We conclude that there is no finite pattern-function for this difference
using only vincular patterns.  Thus we introduce a new type of pattern that simultaneously
captures some information from $\sigma$ and some from $\Fundamental{\sigma}$.
This is a generalization of vincular patterns directly applied to
$\Fundamental{\sigma}$, which we call \emph{arrow patterns}.
At the end of the section we briefly mention how to rewrite the arrow patterns we
use in this result as a linear combination of mesh patterns.

\begin{definition}
An \emph{arrow pattern} $\alpha$ in $\symm_k$ consists of
\begin{itemize}
\item sets of integers $A = \{a_1, \ldots, a_m\}$, $B = \{b_1, \ldots, b_h\}$, and $C = \{c_1, \ldots, c_h\}$ where $A \cup B \cup C = [1, k]$,
\item a string $\nu = a_1 \cdots a_m$, in which some consecutive symbols may be bonded together (this resembles a vincular pattern, although $A$ may not equal $[1,m]$), and
\item a (possibly empty) collection of $h$ arrows $\{b_i \rightarrow c_i : i = 1, \ldots, h\}$.
\end{itemize}
Let $\tau \in \symm_r$ be a permutation, and define $\sigma$ so that $\Fundamental{\sigma} = \tau$. Let $\omega := x_{a_1}x_{a_2} \cdots x_{a_m} = \tau_{t_1} \cdots \tau_{t_m}$ be a substring of the one-line notation for $\tau$. The subsequence $\omega$ is an \emph{occurrence} of the arrow pattern $\alpha$ in $\tau$ if
\begin{itemize}
\item the substring $\omega$ is order isomorphic to $\nu$,
\item if $a_j$ and $a_{j+1}$ are bonded in $\nu$, then $x_{a_{j}}$ and $x_{a_{j+1}}$ are adjacent in $\tau$ (that is, $t_{j+1} = t_j+1$), and
\item there exists $X = \{x_1 < \cdots < x_k\} \subseteq [1,r]$ such that $\{x_{a_1}, x_{a_2}, \ldots, x_{a_m}\}\subseteq X$ and, for every arrow $b_i \rightarrow c_i$, we have $\sigma(x_{b_i}) = x_{c_i}$ for $1\leq i \leq h$.
\end{itemize}
\end{definition}

Note that an \emph{occurrence} of an arrow pattern refers to
an occurrence of the underlying vincular pattern, subject to the additional constraints imposed by arrows. 
Generally speaking, arrow patterns are ripe for further study. In particular, they could be poised to bridge the divide between cycle notation and one-line notation, enabling the application of pattern techniques and results to a substantially broader range of questions. 
 For this paper we will be concerned with patterns which
contain a single arrow. We will also always have one end
of the arrow contained in the vincular portion.

\begin{example}
The permutation $\tau = 63248175$ has $\sigma = \Phi^{-1}(\tau) = 74268351$.
The permutation $\tau$ has two occurrences of the arrow pattern $\AVinP{12}{1/2}$; namely, $24$ and $17$.
The occurrence $24$ matches $\VinPat{12}$ because $\sigma(2) = 4$ letting $X = \{2 < 4\}$.
Similarly, $17$ matches $\VinPat{12}$ where $X=\{1< 7\}$ and $\sigma(1) = 7$.
Although $48$ matches $\VinPat{12}$, it is not an occurrence of the given arrow pattern because $\sigma(4) = 6 \neq 8$.
  The subsequence $48$ does, however, match $\AVinP{13}{1/2}$ where $X = \{4 < 6 < 8\}$ and $\sigma(4) = 6$.
\end{example}

Arrow patterns offer many pattern coincidences stemming from the presence of the fundamental bijection in the definition, and we highlight some identities
that we will use subsequently.  Sometimes, arrow patterns might simplify to a
vincular pattern, due to properties of the fundamental bijection.  For
example, with a descent $ab$, we would not
need to write $a \shortrightarrow b$, so we have coincidences like
$$\AVinP{21}{2/1} = \VinPat{21} \text{ \ and \ } \AVinP{2,43}{2/1} = \VinPat{21,43}.$$
Sometimes, the arrow expressions imply bonds, as with
$$\AVinP{1,2}{1/2} = \AVinP{12}{1/2},$$
but we may need to keep the arrow itself.
For example, the identity permutation, which is its own image under the fundamental bijection, contains $\VinPat{12}$ but avoids the arrow patterns listed above. 
Some equalities are subtle, such as
\begin{equation}\label{eqn:{1,3}{1/2} = {2,3}{1/2}}
\AVinC{1,3}{1/2} = \AVinC{2,3}{1/2} \text{ \ and \ } \AVinC{1,43}{1/2} =
\AVinC{2,43}{1/2},
\end{equation}
which result from knowing that the $1$ and $2$ come before
the $\VinPat{3}$ and $\VinPat{43}$ respectively.  Note in these last cases that
the pattern counts are equal, but the patterns themselves are not.

We now present a finite pattern-function for reflection length using arrow patterns.

\begin{proposition}\label{prop: reflength as pairs} 
$\reflength(\sigma) = \left(\VinCount{21} + \AVinC{12}{1/2}\right)(\Fundamental{\sigma}).$
\end{proposition}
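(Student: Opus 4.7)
The plan is to reduce to Corollary~\ref{cor:reflection length pattern-function} and count non-left-to-right-maxima of $\phi := \Fundamental{\sigma}$. By Proposition~\ref{prop:branden claesson}, the number of left-to-right maxima of $\phi$ equals the number of cycles of $\sigma$, and since cycle count plus reflection length equals $n$, it suffices to prove that
$$\left(\VinCount{21} + \AVinC{12}{1/2}\right)(\phi) = n - \#\{\text{left-to-right maxima of } \phi\}.$$
Equivalently, I want to show that the left-hand side counts those positions $j+1 \in \{2,\ldots,n\}$ that fail to be a left-to-right maximum of $\phi$.

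The key observation is a characterization of when consecutive entries of $\phi$ satisfy $\sigma(\phi_j) = \phi_{j+1}$. Because $\phi$ is obtained by erasing the cycle markers in the standard representation of $\sigma$, each left-to-right maximum of $\phi$ marks the start of a new cycle, and conversely each non-left-to-right maximum continues the current cycle. Within a cycle $(c_1,c_2,\ldots,c_\ell)$ written with $c_1$ maximal, we have $\sigma(c_i) = c_{i+1}$, so consecutive entries in $\phi$ lying in the same cycle must also be consecutive images under $\sigma$. Thus $\sigma(\phi_j) = \phi_{j+1}$ if and only if $\phi_{j+1}$ is not a left-to-right maximum of $\phi$.

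It remains to split the pairs $(j,j+1)$ with $\sigma(\phi_j) = \phi_{j+1}$ according to whether $\phi_j > \phi_{j+1}$ or $\phi_j < \phi_{j+1}$. In the descent case, the implication is automatic by the same ``descent implies same cycle'' property of the fundamental bijection used in the proof of Theorem~\ref{thm:depth pattern}; such pairs are therefore counted precisely by $\VinCount{21}(\phi)$. In the ascent case, the condition $\sigma(\phi_j) = \phi_{j+1}$ is the arrow constraint $1\shortrightarrow 2$ on top of the vincular pattern $\VinPat{12}$, so these pairs are counted by $\AVinC{12}{1/2}(\phi)$. Summing the two yields the total count of non-left-to-right-maxima positions $j+1$, completing the proof. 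The only subtle point (and arguably the main thing to check) is the equivalence in the previous paragraph, which requires care about the standard form of cycle notation; once that is in hand the rest is bookkeeping.
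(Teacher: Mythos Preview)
Your proof is correct and follows essentially the same approach as the paper: both arguments count the $n-c$ indices $j$ for which $\sigma(\phi_j)=\phi_{j+1}$ (equivalently, those $j+1$ that are not left-to-right maxima of $\phi$) and split them into the descent case, captured by $\VinCount{21}$, and the ascent case, captured by $\AVinC{12}{1/2}$. The only cosmetic difference is that you phrase the count via non-left-to-right maxima while the paper phrases it via ``same cycle'' indices; these are the same thing by the definition of the fundamental bijection, so the invocation of Proposition~\ref{prop:branden claesson} is unnecessary.
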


\begin{proof}
Set $\phi:=\Fundamental{\sigma}$, and suppose that $\sigma \in \symm_n$ has $c$ cycles. It is well-known that $\reflength(\sigma) = n-c$. There are $c-1$ indices $i$ such that $\phi_i$ and $\phi_{i+1}$ are in distinct cycles.
Thus there are $(n-1)-(c-1) = \reflength(\sigma)$ indices $i$ such that $\sigma(\phi_i) = \phi_{i+1}$.
Assume $\sigma(\phi_i) = \phi_{i+1}$ and
let $x := \phi_i$ and $y := \phi_{i+1}$.
There are two cases:
either $x > y$ in which case $\VinPat{xy} \sim \VinPat{21}$,
or $x < y$ and so $\VinPat{xy} \sim \AVinP{12}{1/2}$.
\end{proof}

The pattern-function for depth that arises from Theorem~\ref{thm:depth pattern} is not ideal for comparing with the pattern-function for length that we will develop
below in Theorem~\ref{thm:length of perm}. To address this, we introduce an alternative pattern-function for depth, using arrow patterns.

\begin{theorem}\label{thm:depth of perm}
$\depth(\sigma) = \reflength(\sigma) + \left(\VinCount{2,31} + \VinCount{41,32} +
\VinCount{31,42} + \AVinC{1,23}{1/4} + \AVinC{2,13}{2/4}\right)(\Fundamental{\sigma}).$
\end{theorem}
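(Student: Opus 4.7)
The plan is to reduce the statement to a purely pattern-theoretic identity in $\phi := \Fundamental{\sigma}$ by invoking Theorem~\ref{thm:depth pattern} and Proposition~\ref{prop: reflength as pairs}. Subtracting $\reflength(\sigma) = (\VinCount{21} + \AVinC{12}{1/2})(\phi)$ from $\depth(\sigma) = (\VinCount{21} + \VinCount{2,31} + \VinCount{31,2})(\phi)$ and cancelling the shared $\VinCount{2,31}$ that appears on both sides of the desired equation, the claim becomes equivalent to
\[
\VinCount{31,2}(\phi) = \AVinC{12}{1/2}(\phi) + \VinCount{41,32}(\phi) + \VinCount{31,42}(\phi) + \AVinC{1,23}{1/4}(\phi) + \AVinC{2,13}{2/4}(\phi).
\]

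I would prove this identity by bijectively associating each $\VinPat{31,2}$ occurrence $(b, a, c)$ at positions $(j, j+1, k)$ in $\phi$ with one object counted on the right. The essential structural fact is that, because $b > c$ and $b$ precedes $c$ in $\phi$, the value $c$ cannot be a left-to-right maximum; hence $c$ shares its cycle with $\phi_{k-1}$ and $\sigma(\phi_{k-1}) = c$. Writing $y := \phi_{k-1}$, three cases emerge: (i) $k = j+2$, in which $y = a$ and $(a, c)$ is an $\AVinC{12}{1/2}$ occurrence; (ii) $k > j+2$ with $y > c$, in which the four-tuple $(b, a, y, c)$ forms a $\VinPat{41,32}$ occurrence when $b > y$ and a $\VinPat{31,42}$ occurrence when $b < y$; and (iii) $k > j+2$ with $y < c$, in which the ascent $(y, c)$ at $(k-1, k)$ is itself an $\AVinC{12}{1/2}$ occurrence.

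The main obstacle will be ensuring that cases (i) and (iii) do not jointly overcount the $\AVinC{12}{1/2}$ targets, since distinct $\VinPat{31,2}$ occurrences can share the same $c$ at position $k$ while having their descents $(b, a)$ at different earlier positions. The two arrow patterns $\AVinC{1,23}{1/4}$ and $\AVinC{2,13}{2/4}$ are tailored to absorb precisely this excess: their fourth symbol $x_4$ is forced to equal $\sigma(x_1)$ or $\sigma(x_2)$, which allows one to reassign an overflow $\VinPat{31,2}$ occurrence to an arrow-pattern occurrence in which a value of the form $\sigma^{-1}(b)$ plays the role of the missing small element $x_1$. Making this reassignment into a well-defined bijection requires a careful case analysis of where $\sigma^{-1}(b)$ lies in $\phi$ relative to the positions $j+1$ and $k-1$, using the fact that each cycle of $\sigma$ appears as a contiguous subword of $\phi$; this is where the bulk of the combinatorial work of the proof resides.
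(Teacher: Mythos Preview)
Your reduction is correct and is also the paper's starting point: both invoke Theorem~\ref{thm:depth pattern} and Proposition~\ref{prop: reflength as pairs} to reduce the claim to
\[
\VinCount{31,2}(\phi) \;=\; \AVinC{12}{1/2}(\phi) + \VinCount{41,32}(\phi) + \VinCount{31,42}(\phi) + \AVinC{1,23}{1/4}(\phi) + \AVinC{2,13}{2/4}(\phi).
\]
Your case~(ii) handles the situation $\phi_{k-1}>c$ correctly and bijects those $\VinPat{31,2}$ occurrences with the $\VinPat{41,32}$ and $\VinPat{31,42}$ occurrences. The gap is in cases~(i) and~(iii): the suggestion that each overflow occurrence corresponds to an arrow pattern ``in which a value of the form $\sigma^{-1}(b)$ plays the role of the missing small element'' does not work. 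Take $\phi = 52413$, so $\sigma$ is the $5$-cycle $(5\,2\,4\,1\,3)$. With $k=5$ and $c=3$ there are two $\VinPat{31,2}$ occurrences, at $(j,j{+}1,k)=(3,4,5)$ (your case~(i)) and $(1,2,5)$ (overflow). For the overflow, $b=5$ and $\sigma^{-1}(5)=3=c$, which cannot serve as an element strictly below $c$. The unique arrow-pattern occurrence ending at position $k$ is $(2,1,3)$ at positions $(2,4,5)$, whose small element is $x=2$ with $\sigma(2)=4$; neither $x$ nor $\sigma(x)$ is determined by $b$. The example $\phi = 623514$ rules out the alternative guess $x=a$ as well: the overflow at $j=1$ has $a=2$, whereas the unique arrow occurrence uses $x=3$. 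In general the small element of the arrow pattern records where $\phi_1,\ldots,\phi_{k-1}$ crosses \emph{upward} through level $c$, which is not a function of the particular down-crossing $(b,a)$; a correct bijection would need a genuinely different pairing rule than the one you propose.

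The paper sidesteps this entirely by inserting the intermediate count $\AVinC{1,2}{1/3}$. Decomposing $\VinCount{1,2}(\phi)$ once according to the relative size of $\sigma(x)$ and once according to $\sigma^{-1}(x)$, and cancelling via the coincidence $\AVinC{1,3}{1/2} = \AVinC{2,3}{1/2}$, yields $\VinCount{31,2}=\AVinC{12}{1/2}+\AVinC{1,2}{1/3}$. A parallel double-count of $\VinCount{1,32}$, combined with a refinement of $\AVinC{1,2}{1/3}$ by the value immediately preceding its ``$2$'', then gives $\AVinC{1,2}{1/3}=\VinCount{31,42}+\VinCount{41,32}+\AVinC{2,13}{2/4}+\AVinC{1,23}{1/4}$. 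No overflow bookkeeping is required.
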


\begin{proof}
Throughout this proof, let all vincular pattern counts be
applied to the fundamental bijection $\Fundamental{\sigma}$.
Given a $\VinPat{x,y} \sim \VinPat{1,2}$ occurrence in $\Fundamental{\sigma}$
there are five possible relative values for
$\sigma(x)$ and four possible relative values for $\sigma^{-1}(x)$. Taken together, these yield
$$\VinCount{1,2}
  = \VinCount{21,3} + \AVinC{1,2}{1/1} + \AVinC{1,3}{1/2} + \AVinC{1,2}{1/3} + \AVinC{12}{1/2} = \AVinC{2,3}{1/2} + \AVinC{1,2}{1/1} + \VinCount{21,3} + \VinCount{31,2}.$$
By subtracting common terms and using Equation~\eqref{eqn:{1,3}{1/2} = {2,3}{1/2}}, this reduces to
$\AVinC{1,2}{1/3} + \AVinC{12}{1/2} = \VinCount{31,2}$.

Proposition~\ref{prop: reflength as pairs} yields
\begin{equation}\label{another useful equation}
\VinCount{31,2} = \reflength(\sigma) + \AVinC{1,2}{1/3} - \VinCount{21}.
\end{equation}
Similarly, we can analyze $\VinCount{1,32}$ to get
$$\VinCount{21,43} + \AVinC{1,32}{1/1} + \AVinC{1,43}{1/2} + \AVinC{1,42}{1/3} + \AVinC{1,32}{1/4} + \AVinC{132}{1/3}
= \AVinC{2,43}{1/2} + \AVinC{1,32}{1/1} + \VinCount{21,43} + \VinCount{31,42} + \VinCount{41,32},$$
and Equation~\eqref{eqn:{1,3}{1/2} = {2,3}{1/2}} allows us to reduce this to
\begin{equation}\label{useful equation}
\AVinC{1,42}{1/3} + \AVinC{1,32}{1/4} + \AVinC{132}{1/3} = \VinCount{31,42} + \VinCount{41,32}.
\end{equation}
Considering the element prior to the ``$2$'' in the arrow pattern $\AVinC{1,2}{1/3}$ produces the identity
$$\AVinC{1,2}{1/3}
= \AVinC{2,13}{2/4} + \AVinC{1,23}{1/4} + \AVinC{1,32}{1/4} + \AVinC{1,42}{1/3} + \AVinC{132}{1/3}$$
which combines with Equation~\eqref{useful equation} to yield
$$\AVinC{1,2}{1/3} =
  \VinCount{31,42} + \VinCount{41,32} + \AVinC{2,13}{2/4} + \AVinC{1,23}{1/4}$$
The result now follows from Equation~\eqref{another useful equation} and Proposition~\ref{thm:depth pattern}.
\end{proof}

We can now calculate the length of an arbitrary permutation by enumerating arrow pattern occurrences in its image under the fundamental bijection. The benefit of this, as opposed to the straightforward statement in Observation~\ref{obs:length as patterns}, is that it can be easily compared with Theorem~\ref{thm:depth of perm}, and hence will enable us to address shallow permutations.

\begin{theorem}\label{thm:length of perm}
Let $\sigma \in \symm_n$ be an arbitrary permutation. Then 
\begin{equation}
\label{eqn: length formula}\length(\sigma) =
\reflength(\sigma) + 2\left(\VinCount{2,31} +
\VinCount{41,32} + \AVinC{1,23}{1/4}\right)(\Fundamental{\sigma})
\end{equation}
\end{theorem}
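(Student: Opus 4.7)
Write $\phi := \Fundamental{\sigma}$. My plan is to mimic the style of the proof of Theorem~\ref{thm:depth of perm} and translate the inversion count $\length(\sigma) = \VinCount{2,1}(\sigma)$, given by Observation~\ref{obs:length as patterns}, into a pattern-function on $\phi$ by classifying each inversion of $\sigma$ according to the structure of $\phi$.

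The key dictionary, implicit in the proofs of Theorem~\ref{thm:depth pattern} and Proposition~\ref{prop: reflength as pairs}, is that for a value $v = \phi_j$ one has $\sigma^{-1}(v) = \phi_{j-1}$ whenever $\phi_j$ is not a left-to-right maximum in $\phi$ (so $v$ is not the start of its cycle), while $\sigma^{-1}(v)$ is the final entry of $v$'s cycle in $\phi$ when $\phi_j$ is a left-to-right maximum. Every inversion pair $u > v$ with $\sigma^{-1}(u) < \sigma^{-1}(v)$ therefore corresponds to a configuration of $u, v$ in $\phi$ together with their respective ``$\sigma$-predecessors,'' and the analysis splits naturally according to whether each of $u, v$ is or is not a left-to-right maximum of $\phi$.

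I would execute the proof algebraically, following the template of Theorem~\ref{thm:depth of perm}. The idea is to expand a carefully chosen base pattern count on $\phi$---a natural starting point is $\VinCount{1,2}(\phi)$, possibly together with a longer pattern such as $\VinCount{1,32}(\phi)$---by splitting on the value of $\sigma$ applied to each element, and then collect the resulting arrow-pattern identities in the spirit of the intermediate equations derived in the proof of Theorem~\ref{thm:depth of perm}. Combined with Proposition~\ref{prop: reflength as pairs} and the pattern-count coincidences $\AVinC{1,3}{1/2} = \AVinC{2,3}{1/2}$ and $\AVinC{1,43}{1/2} = \AVinC{2,43}{1/2}$ noted just prior to it, these identities should telescope into the claimed equality.

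The main obstacle is the case in which at least one of $u, v$ is a left-to-right maximum of $\phi$, so that $\sigma^{-1}$ jumps from a cycle start back to the cycle's end rather than to the adjacent position in $\phi$. This is precisely the role played by the arrow pattern $\AVinC{1,23}{1/4}$: the arrow $1 \rightarrow 4$ records that the smallest element of a bonded $\VinPat{23}$-ascent is sent by $\sigma$ to a much larger element (necessarily a cycle maximum, absent from the vincular portion of the pattern). Verifying that $\AVinC{1,23}{1/4}$, together with $\VinCount{2,31}$ and $\VinCount{41,32}$, exactly absorbs the inversions contributed by left-to-right maxima, without double-counting or omission, will require the same careful combinatorial bookkeeping as in the proof of Theorem~\ref{thm:depth of perm}.
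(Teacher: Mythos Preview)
Your proposal is a plan rather than a proof, and the plan diverges substantially from what the paper actually does.  The paper does \emph{not} prove Theorem~\ref{thm:length of perm} by the kind of algebraic pattern-expansion you describe; instead it proceeds by induction on $\reflength(\sigma)$.  In the inductive step one removes the smallest non-fixed point $h$ from its cycle to form $\sigma'$, checks directly that $\length(\sigma)=\length(\sigma')+2|K|+1$ for an explicit index set $K$, and then builds an injection $\chi$ from $P$-patterns in $\Fundamental{\sigma'}$ to $P$-patterns in $\Fundamental{\sigma}$ together with a bijection between the complement of $\chi$ and $K$.  The three patterns $\VinPat{2,31}$, $\VinPat{41,32}$, $\AVinP{1,23}{1/4}$ are not ``discovered'' by the argument; they are taken as given, and the work lies in tracking how their occurrences change under a single cycle-shortening move.

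The reason the paper abandons the template of Theorem~\ref{thm:depth of perm} is important and is the genuine gap in your plan.  That template works for depth because Theorem~\ref{thm:depth pattern} already expresses $\depth(\sigma)$ as a vincular pattern-function of $\Fundamental{\sigma}$; the manipulations in Theorem~\ref{thm:depth of perm} only rewrite one $\phi$-pattern-function as another.  For length there is no such starting formula: you begin with $\VinCount{2,1}(\sigma)$, which counts pairs $(u,v)$ with $u>v$ and $\sigma^{-1}(u)<\sigma^{-1}(v)$, and to locate $\sigma^{-1}(u)$ and $\sigma^{-1}(v)$ in $\phi$ you must simultaneously know whether each of $u$, $v$ is a left-to-right maximum and, if so, where its cycle ends.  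This is a four-way case split in which two of the cases involve nonlocal information about $\phi$ that a single arrow cannot encode; the expansions you propose (splitting $\VinCount{1,2}$ or $\VinCount{1,32}$ on $\sigma$-images) track $\sigma$, not $\sigma^{-1}$, and do not obviously reach the inversion count.  Until you exhibit the actual identities and show they telescope, the argument is incomplete, and the paper's inductive bijections are the missing idea.
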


\begin{proof}
We prove the claim by inducting on $\reflength(\sigma)$.
If $\reflength(\sigma) = 0$ then $\sigma = \identity$, the identity permutation.
The theorem holds in this case because $\Fundamental{\identity} = \identity$ and all quantities in the statement of the theorem would be $0$.

Assume Equation~\eqref{eqn: length formula} holds for all permutations with reflection length less than some $k>0$, and we will prove that it also holds when $\reflength(\sigma) = k$.
We start by defining:
\begin{align*}
 h &:= \min \{ x : \sigma_x \neq x\},\\
 i &:= (\sigma^{-1})_h,\\
 r &:= \sigma_h, \text{ and}\\
\sigma' &:= (r\ h)\sigma.
\end{align*}
Note that minimality of $h$ means $h < i, r$. The one-line notations of $\sigma$ and $\sigma'$ differ only in the positions of $r$ and $h$, with
$$\sigma = 123(h-1)r\cdots h\cdots \text{ \ and \ } \sigma'= 123(h-1)h\cdots r\cdots.$$
In standard cycle representation,
the $h$ is removed from its cycle in $\sigma$ and inserted after $(1)(2)\cdots(h-1)$
as an additional fixed point in $\sigma'$:
$$\sigma = (1)\cdots(h-1)\cdots(\cdots\ i\ h\ r \cdots) \cdots  \text{ \ and \ }
\sigma'= (1)\cdots(h-1)(h) \cdots (\cdots\ i\ r \cdots) \cdots.$$

Let $K := \{k : h < k < i \text{ and } r > \sigma_k > h\}$.
Consider the inversion set $\inversions(\sigma')$ and define
$$\inversions^+(\sigma') := \{(a,j) \in \inversions(\sigma') : a \neq i\} \sqcup \{(h,j) : (i,j) \in \inversions(\sigma')\},$$
where $|\inversions^+(\sigma')| = |\inversions(\sigma')|$ and $\inversions^+(\sigma') \subseteq \inversions(\sigma)$.
The inversion set $\inversions(\sigma)$ can be written as the disjoint union
$$\inversions(\sigma) = \inversions^+(\sigma') \sqcup \{(h, k), (k, i) : k \in K\} \sqcup \{(h, i)\},$$
which implies that
$$\length(\sigma) = \length(\sigma') + 2|K| + 1.$$

Because a permutation's reflection length and number of cycles sum to its size, and because $\sigma'$ has one more cycle than $\sigma$, we have 
$\reflength(\sigma) = \reflength(\sigma') + 1$.
Set
$$P := \{\VinPat{2,31}, \VinPat{41,32}, \AVinP{1,23}{1/4}\},$$
and call an occurrence of any of these three patterns a \emph{$P$-pattern}.
For a permutation $\pi$, we write $P(\pi)$ to denote all $P$-patterns in $\pi$.
For the remainder of this proof, set $\SF := \Fundamental{\sigma}$ and $\SPF := \Fundamental{\sigma'}$. Therefore by the inductive hypothesis it suffices to show that $|K|$ is equal to
\begin{equation}
\label{eqn:difference in pattern counts full}\
|P(\SF)| - |P(\SPF)|.
\end{equation}

Recall the definition of $h$, meaning that the first $h-1$ values in $\SF$ and $\SPF$ are fixed. Thus if $h$ appears in any $P$-pattern in either permutation, then $h$ must be the smallest value in the occurrence.

We will define an injection $\chi: P(\SPF) \to P(\SF)$, with $X := P(\SF) \setminus \chi(P(\SPF))$. We will then construct a bijection $\rho: X \to K$ to complete the proof. The map $\chi$ is defined as follows:

\begin{quote}
\begin{itemize}
\item[Case~0:] 
If $p \in P(\SPF)$ and $p \in P(\SF)$, meaning that $p$ does not use the letter $h$, then $\chi(p) := p$.
Otherwise, either a bonded group or the arrow of $p$ in $\SPF$
gets ``interrupted'' by the $h$ in $\SF$, which can occur in five different ways.

\item[Case~1:]
An occurrence of $\VinPat{2,3 1}$ appears in $\SPF$ but not in $\SF$ if and only if it is $\VinPat{x,i r} \in P(\SPF)$. Thus $\VinPat{x,i h r} \sim \VinPat{3,4 1 2}$ in $\SF$, and we set $\chi(\VinPat{x, i r}) := \VinPat{x, i h}$.

\item[Case~2:]
If $\VinPat{i r,x y} \sim \VinPat{41,32} \in P(\SPF)$ then
$\VinPat{i h r,x y} \sim \VinPat{512,43}$ in $\SF$.
Set $\chi(\VinPat{i r,x y}) := \VinPat{ih,xy}$.

\item[Case~3:]
If $\VinPat{x y,i r} \sim \VinPat{4 1,3 2} \in P(\SPF)$ then
$\VinPat{x y,i h r} \sim \VinPat{5 2,4 1 3}$ in $\SF$.
Set $\chi(\VinPat{x y,i r}) := \VinPat{y, i h} \sim \VinPat{2,31}$.

\item[Case~4:]
If $\VinPat{i, x y} \sim \AVinP{1,2 3}{1/4} \in P(\SPF)$ then
$h$ in $\SF$ interrupts the arrow.
By definition of the fundamental bijection, $\VinPat{ih,xy} \sim \AVinP{2 1,3 4}{1/5}$ in $\SF$.
Set $\chi(\VinPat{i, x y}) := \VinPat{h,x y} \sim \AVinP{1,23}{1/4}$.

\item[Case~5:]
Finally, if $\VinPat{x,i r} \sim \AVinP{1,2 3}{1/4}$ then
$\VinPat{x,i h r} \sim \AVinP{2,3 1 4}{2/5}$ in $\SF$.
Set $\chi(\VinPat{x,i r}) := \VinPat{x, i h} \sim \VinPat{2,31}$.
\end{itemize}
\end{quote}
A straightforward case analysis shows that the map $\chi$ is injective.

Consider an element $p \in X$, meaning that $p$ is a $P$-pattern occurring in $\SF$ that is not in the image of $\chi$. To avoid Case~0, the occurrence $p$ must use $h$, and $h$ must be minimal in the occurrence $p$, as discussed above. 
If $\VinPat{x, i h} \sim \VinPat{2, 3 1} \in X$, then 
to avoid Cases~1, 3, and~5, we must have
\begin{align*}
&r > x,\\
&\text{if } r < i \text{ then } \sigma^{-1}(x) < i, \text{ and}  \\
&\text{if }r \ge i \text{ then } \sigma_x < r.
\end{align*}
If $r < i$, then $h < \sigma^{-1}(x) < i$ and $r > x > h$, so set $\rho(\VinPat{x,ih}) := \sigma^{-1}(x)$.
If $r \ge i$, then $h < x < i$ and $r > \sigma_x > h$, and we set $\rho(\VinPat{x,ih}) := x$.
Every $k \in K$ with the position
of $k$ in $\SF$ coming before the position of $h$ must
fall into either one of these two options.

If $\VinPat{ih,xy} \sim \VinPat{41,32} \in X$, then $\sigma_x = y$ and to
avoid Case~2 we must have $r > y$.  Then $h < x < i$ and $r > y > h$, and we set
$\rho(\VinPat{ih, xy}) := x$.  If $\VinPat{h,xy} \sim \AVinP{1,23}{1/4}$, then
$\sigma_x = y$ and $\sigma_h = r > y$.  To avoid Case~4 we must have $i > x$.
Then $h < x < i$ and $r > y > h$, and we set $\rho(\VinPat{h,xy}) := x$.
Every $k \in K$ with the position
of $k$ in $\SF$ coming after the position of $h$ must
fall into one of these two options.
Inverting $\rho$ is straightforward based on
whether a given $k \in K$ appears before or after $h$.
\end{proof}

Theorems~\ref{thm:depth of perm} and~\ref{thm:length of perm}
recover the inequality \cite[Observation 2.2]{petersen tenner}:
$$\frac{\reflength(\sigma) + \length(\sigma)}{2} \leq \depth(\sigma) \leq \length(\sigma),$$
or, in the language of \cite{diaconis graham},
$\reflength(\sigma) + \length(\sigma) \leq \displacement(\sigma) \leq  2\length(\sigma)$.
Furthermore, they show the following.

\begin{corollary}\label{thm:depth vs length}
For any permutation $\sigma$,
$$\depth(\sigma) = \displaystyle{\frac{\length(\sigma) + \reflength(\sigma)}{2}} + \left(\VinCount{31,42} + \AVinC{2,13}{2/4}\right)(\Fundamental{\sigma}).$$
In particular, a permutation is shallow if and only if its image under the fundamental bijection avoids
$\VinPat{31,42}$ and $\AVinP{2,13}{2/4}$.
\end{corollary}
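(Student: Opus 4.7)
The plan is to obtain the identity by directly subtracting Theorem~\ref{thm:length of perm} from Theorem~\ref{thm:depth of perm}, and then to derive the shallowness characterization from the fact that pattern counts are nonnegative.

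First I would rewrite Theorem~\ref{thm:length of perm} by dividing by $2$ and rearranging, giving
\begin{equation*}
\frac{\length(\sigma)+\reflength(\sigma)}{2} \;=\; \reflength(\sigma) + \left(\VinCount{2,31} + \VinCount{41,32} + \AVinC{1,23}{1/4}\right)(\Fundamental{\sigma}).
\end{equation*}
Subtracting this from the expression for $\depth(\sigma)$ in Theorem~\ref{thm:depth of perm}, the terms $\reflength(\sigma)$, $\VinCount{2,31}$, $\VinCount{41,32}$, and $\AVinC{1,23}{1/4}$ cancel, leaving exactly
\begin{equation*}
\depth(\sigma) - \frac{\length(\sigma)+\reflength(\sigma)}{2} \;=\; \left(\VinCount{31,42} + \AVinC{2,13}{2/4}\right)(\Fundamental{\sigma}),
\end{equation*}
which is the claimed identity.

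For the second assertion, I would observe that the right-hand side of the identity is a sum of two pattern counts, each of which is a nonnegative integer. Thus $\depth(\sigma) = (\length(\sigma)+\reflength(\sigma))/2$ if and only if
\begin{equation*}
\PatternCount{\VinPat{31,42}}{\Fundamental{\sigma}} = 0 \quad\text{and}\quad \PatternCount{\AVinP{2,13}{2/4}}{\Fundamental{\sigma}} = 0,
\end{equation*}
which by definition of pattern occurrence means $\Fundamental{\sigma}$ avoids both $\VinPat{31,42}$ and $\AVinP{2,13}{2/4}$.

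There is no real obstacle here: once Theorems~\ref{thm:depth of perm} and~\ref{thm:length of perm} are in hand, the corollary is a one-line arithmetic cancellation followed by the nonnegativity remark. The substantive work has already been done in establishing those two pattern-functions; this corollary simply extracts their difference and interprets it combinatorially.
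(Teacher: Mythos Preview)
Your proposal is correct and matches the paper's approach exactly: the paper presents this result as an immediate corollary of Theorems~\ref{thm:depth of perm} and~\ref{thm:length of perm} with no separate proof, and your subtraction-plus-nonnegativity argument is precisely the intended derivation.
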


We now can produce a characterization of shallow permutations that does not depend on arrow
patterns.

\begin{theorem}\label{thm:shallow criteria}
A permutation $\sigma_n \in \symm_n$ is shallow if and only
if its image, $\Fundamental{\sigma}$, under the fundamental bijection avoids
$$\{\VinPat{5,24,13}, \VinPat{4,25,13}, \VinPat{31,42}\}.$$
\end{theorem}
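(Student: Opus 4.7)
My plan is to leverage Corollary~\ref{thm:depth vs length}, which already characterizes shallow permutations as those whose image under the fundamental bijection avoids both $\VinPat{31,42}$ and the arrow pattern $\AVinP{2,13}{2/4}$. Since $\VinPat{31,42}$ is common to both characterizations, the theorem reduces to showing that, for $\tau := \Fundamental{\sigma}$, the pattern $\AVinP{2,13}{2/4}$ occurs in $\tau$ if and only if $\VinPat{5,24,13}$ or $\VinPat{4,25,13}$ occurs in $\tau$. The guiding observation is that the condition $\sigma(b)=d$ with $b<d$ admits exactly two structural realizations in $\tau$ under the fundamental bijection: either $d$ is the cycle-successor of $b$ (so $d$ lies immediately after $b$ in $\tau$ and is not a left-to-right maximum), or $d$ is the maximum of $b$'s cycle (so $d$ appears as a left-to-right maximum of $\tau$ before $b$, and $b$ ends its cycle in $\tau$). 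These two alternatives will correspond, respectively, to $\VinPat{5,24,13}$ and $\VinPat{4,25,13}$, with the fifth letter serving as a witness to the relevant left-to-right-maximum structure.

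For the forward direction, suppose $a<b<c<d$ realize $\AVinP{2,13}{2/4}$ in $\tau$, so that $\tau(q_b)=b$, $(\tau(q_a),\tau(q_a+1))=(a,c)$ with $q_b<q_a$, and $\sigma(b)=d$. In the first realization, $d=\tau(q_b+1)$ is not a left-to-right maximum, so some $e>d$ occupies a position $\le q_b$, and the tuple $(e,b,d,a,c)$ at positions $(q_e,q_b,q_b+1,q_a,q_a+1)$ is an occurrence of $\VinPat{5,24,13}$. In the second realization, $b$ ends its cycle in $\tau$, so $\tau(q_b+1)$ exists (because $q_a>q_b$) and begins the next cycle as a left-to-right maximum $e>d$; since $\tau(q_b+1)=e\ne a=\tau(q_a)$ forces $q_b+1<q_a$, the tuple $(d,b,e,a,c)$ is an occurrence of $\VinPat{4,25,13}$.

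For the converse, any $\VinPat{5,24,13}$-occurrence $(e,b,d,a,c)$ places $d$ at position $q_b+1$ with $e>d$ appearing earlier, so $d$ is not a left-to-right maximum and hence $\sigma(b)=d$; then $(a,b,c,d)$ is an occurrence of $\AVinP{2,13}{2/4}$. For any $\VinPat{4,25,13}$-occurrence $(d,b,e,a,c)$, let $m$ be the maximum of $b$'s cycle in $\sigma$. We claim $m>c$: if $d$ lies in $b$'s own cycle then $m\ge d>c$ tautologically, and if $d$ lies in an earlier cycle then the ordering of cycles in standard form forces $m$ to exceed the maximum of $d$'s cycle, which is itself at least $d$. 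If $e$ is not a left-to-right maximum then $\sigma(b)=e>d>c$; if $e$ is a left-to-right maximum then $b$ ends its cycle and $\sigma(b)=m>c$. Either way, $(a,b,c,\sigma(b))$ witnesses $\AVinP{2,13}{2/4}$. The main obstacle is precisely this double case analysis tracking how the single arrow condition $\sigma(b)=d$ manifests as two distinct structures in $\tau$; once that is sorted, each direction reduces to a short pattern-matching argument.
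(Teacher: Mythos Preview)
Your proposal is correct and follows essentially the same approach as the paper: both start from Corollary~\ref{thm:depth vs length} and reduce to showing that $\AVinP{2,13}{2/4}$-containment in $\Fundamental{\sigma}$ is equivalent to $\{\VinPat{5,24,13},\VinPat{4,25,13}\}$-containment, via the dichotomy of whether $\sigma(b)$ is or is not the maximum of $b$'s cycle. The case analyses differ only cosmetically---for instance, in the forward direction you take any witness $e>d$ to the failure of $d$ being a left-to-right maximum, whereas the paper uses the cycle maximum specifically---but the logical structure is the same.
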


\begin{proof}
From Corollary~\ref{thm:depth vs length}, a permutation
$\sigma \in \symm_n$ is shallow if and only if $\Fundamental{\sigma}$
avoids $$\{\VinPat{31,42}, \AVinP{2,13}{2/4}\}.$$
We will now show that containment (resp., avoidance) of the
latter of these two patterns is equivalent to containment
(resp., avoidance) of the patterns
$\{\VinPat{5,24,13}, \VinPat{4,25,13}\}$.

For $\Fundamental{\sigma}$ to contain
$\VinPat{x,yz} \sim \AVinP{2,13}{2/4}$,
it must be that $$y < x < z < \sigma(x).$$
Either $\sigma(x)$ appears to the right of $x$ in the one-line notation
for $\Fundamental{\sigma}$,
meaning that $\sigma(x)$ is not the largest element
in its cycle in $\sigma$,
or $\sigma(x)$ appears to the left of $x$ in the one-line
notation for $\Fundamental{\sigma}$
because $\sigma(x)$ is the largest element
in its cycle in $\sigma$.
Consider first the former case.
Then the elements $\{x,\sigma(x), y,z\}$ together with
the largest element of $x$'s cycle in $\sigma$ form a
$$\VinPat{5,24,13}\text{-pattern}$$
in $\Fundamental{\sigma}$.
In the latter case, we can make several observations.
First, $y$ (and $z$) is not in the same cycle as
$\{x,\sigma(x)\}$, because $x$'s cycle ends after $x$.
Second, the largest element of another cycle 
appears immediately to the right of $x$ in the one-line
notation of $\Fundamental{\sigma}$, and it must be larger
than $\sigma(x)$,
by definition of the fundamental bijection.
Therefore the elements $\{\sigma(x),x,y,z\}$ together with
the element immediately after $x$ in $\Fundamental{\sigma}$
form a $$\VinPat{4,25,13}\text{-pattern}$$
in $\Fundamental{\sigma}$.

Now suppose that $\Fundamental{\sigma}$ contains
$\VinPat{a,xb,yz} \sim\VinPat{5,24,13}$.
Left-to-right maxima in $\Fundamental{\sigma}$
indicate the beginning of new cycles in $\sigma$,
so we must have $b = \sigma(x)$.
Therefore $\VinPat{x,yz} \sim \AVinP{2,13}{2/4}$.

If, on the other hand, $\Fundamental{\sigma}$ contains
$\VinPat{a,xb,yz} \sim\VinPat{4,25,13}$,
then either $\sigma(x) = b$, in which case we have
$\VinPat{x,yz} \sim \AVinP{2,13}{2/4}$, or $\sigma(x) \neq b$.
Suppose $\sigma(x) \neq b$, so that $\sigma(x)$ is the
largest element in $x$'s cycle in $\sigma$ and it appears
to the left of $x$ in the one-line notation for
$\Fundamental{\sigma}$. If $a$ is in the same cycle as $x$,
then $\sigma(x) \ge a$ and hence $\VinPat{x,yz} \sim \AVinP{2,13}{2/4}$.
Otherwise, $\sigma(x)$ is the largest element in a cycle appearing to the right of the cycle containing $a$
in the standard representation of $\sigma$, meaning once again that $\sigma(x) \ge a$ and so $\VinPat{x,yz} \sim \AVinP{2,13}{2/4}$.

Therefore $\AVinP{2,13}{2/4}$-avoidance is equivalent to $\{\VinPat{5,24,13}, \VinPat{4,25,13}\}$-avoidance, completing the proof.
\end{proof}

The arrow patterns provided above are novel, which might leave the reader
dissatisfied. For those who prefer mesh patterns, we can also use those constructions to express these results.

\begin{proposition}
$$\AVinP{1,23}{1/4} =
\MeshPattern{}{4}{1/1, 2/4, 3/2, 4/3}{1/0, 1/1, 1/2, 1/3, 1/4, 3/0, 3/1, 3/2, 3/3, 3/4}
-\MeshPattern{}{4}{1/1, 2/4, 3/2, 4/3}{0/3, 0/4, 1/0, 1/1, 1/2, 1/3, 1/4, 3/0, 3/1, 3/2, 3/3, 3/4}$$
\end{proposition}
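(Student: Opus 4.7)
The plan is to show, for any permutation $\tau=\Fundamental{\sigma}$, that both sides count the same configurations. I would first unpack the diagrams: each mesh pattern has underlying permutation $1423$ (read off the four dots), with column $1$ and column $3$ entirely shaded, so both diagrams count four-tuples $(x_1,v,x_2,x_3)$ in $\tau$ matching the vincular pattern $\VinPat{14,23}$, meaning $x_1<x_2<x_3<v$ with $x_1$ immediately followed by $v$ and $x_2$ immediately followed by $x_3$. The second diagram shades two additional cells $(0,3)$ and $(0,4)$, which together forbid any letter of $\tau$ strictly to the left of $x_1$'s position with value greater than $x_3$. Thus the right-hand side counts exactly those $\VinPat{14,23}$-occurrences for which at least one letter of $\tau$ strictly before $x_1$ has value greater than $x_3$.

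The left-hand side $\AVinC{1,23}{1/4}(\tau)$ counts triples $(x_1,x_2,x_3)$ with $x_1<x_2<x_3$ that form a $\VinPat{1,23}$-occurrence in $\tau$ and satisfy $\sigma(x_1)>x_3$. Writing $p_1$ for the position of $x_1$ in $\tau$, I would define
$$\chi(x_1,x_2,x_3):=(x_1,\tau_{p_1+1},x_2,x_3)$$
and prove that $\chi$ is a bijection from the left-hand set onto the right-hand set. Injectivity is immediate since the triple is recovered from entries $1$, $3$, $4$ of the four-tuple.

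The main work is a local analysis of $\sigma(x_1)$ via the fundamental bijection. Let $M$ be the largest left-to-right maximum of $\tau$ at a position at most $p_1$, equivalently the LR-max that starts $x_1$'s cycle. The standard description of $\Fundamental{\cdot}$ gives that either $\tau_{p_1+1}<M$ and $\sigma(x_1)=\tau_{p_1+1}$, or $\tau_{p_1+1}>M$ (equivalently $\tau_{p_1+1}$ is itself a new LR-max) and $\sigma(x_1)=M$. Since $\sigma(x_1)>x_3>x_1$, the value $x_1$ cannot itself be an LR-max (otherwise $\sigma(x_1)\le x_1$), so $M>x_1$ sits strictly before $p_1$. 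A short case check then yields
$$\sigma(x_1)>x_3 \ \Longleftrightarrow\ \tau_{p_1+1}>x_3 \text{ \ and \ } M>x_3,$$
and because $M$ is the maximum value in the prefix $\tau_1\cdots\tau_{p_1}$, the condition ``$M>x_3$'' is equivalent to ``some letter of $\tau$ strictly before $p_1$ has value greater than $x_3$.'' Together these show that $(x_1,x_2,x_3)$ is counted on the left if and only if $\chi(x_1,x_2,x_3)$ is counted on the right, completing the bijection. The main obstacle is the cycle-ending case of the displayed equivalence, where $\sigma(x_1)=M$ sits strictly before $p_1$ rather than at position $p_1+1$, and matching that value with the shading constraint on column $0$ is the step that requires more than bookkeeping.
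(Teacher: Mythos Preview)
Your proof is correct and follows essentially the same route as the paper's. Both arguments insert the value $w=\tau_{p_1+1}$ immediately after $x_1$ to pass from $\VinPat{1,23}$ to $\VinPat{14,23}$, and both reduce the arrow condition $\sigma(x_1)>x_3$ to a statement about the prefix of $\tau$; your explicit equivalence $\sigma(x_1)>x_3 \Leftrightarrow (\tau_{p_1+1}>x_3 \text{ and } M>x_3)$ is simply a cleaner packaging of the paper's case analysis, and your bijection $\chi$ makes explicit what the paper calls ``set differences.''
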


\begin{proof}
Suppose  $\VinPat{x,yz}$ is an occurrence of $\AVinP{1,23}{1/4}$ in $\tau$, with $\Fundamental{\sigma} = \tau$.  Then either
$\sigma(x)$ follows $x$ in the one-line notation of $\tau$, or $\sigma(x)$ is at the beginning of $x$'s cycle in $\sigma$ and thus appears to the left of $x$ in $\tau$.
In either case, let $w$ be the value immediately after $x$ in $\tau$. By definition of the fundamental bijection, $w \ge \sigma(x)$, so we must have $\VinPat{xw,yz} \sim \VinPat{14,23}$.

Now suppose that $\VinPat{xw,yz} \sim \VinPat{14,23}$ but
$\VinPat{x,yz}$ is not an occurrence of $\AVinP{1,23}{1/4}$.
We must have $\sigma(x) < z$, so that 
$\sigma(x) < w$ and $\sigma(x)$ is the largest element of its cycle (appearing to the left of $x$ in $\tau$).
Equivalently, there is no element greater than $z$ to the left of $w$ in the one-line notation for $\tau$.

By set differences, then, the result is proved.
\end{proof}

It can be shown similarly that
$$\AVinP{2,13}{2/4} =
\MeshPattern{}{4}{1/2, 2/4, 3/1, 4/3}{1/0, 1/1, 1/2, 1/3, 1/4, 3/0, 3/1, 3/2, 3/3, 3/4}
-\MeshPattern{}{4}{1/2, 2/4, 3/1, 4/3}{0/3, 0/4, 1/0, 1/1, 1/2, 1/3, 1/4, 3/0, 3/1, 3/2, 3/3, 3/4}$$
These equivalences allow for mesh pattern formulations of
Theorem~\ref{thm:depth of perm} and~\ref{thm:length of perm}.

\section{Shallow Cycles and Involutions}\label{sec:shallow cycles and involutions}

In this section, we specialize our interest in shallow permutations to shallow cycles and shallow involutions.

Integer sequences celebrated throughout combinatorics turn up in the literature
related to the length, depth, and reflection-length permutation statistics.  For
example in \cite{petersen tenner}, Petersen and the second author show that the
number of permutations $\sigma \in \symm_n$ which have $\length(\sigma) =
\reflength(\sigma)$ is given by the Fibonacci numbers \cite[A000045]{oeis}, and
it is shown in \cite{diaconis graham} that the cases where $\length(\sigma) =
\depth(\sigma)$ are given by the Catalan numbers \cite[A000108]{oeis}.  

Here we will show that the shallow involutions are enumerated by the
Motzkin numbers \cite[A001006]{oeis}, via a bijection between shallow
involutions and circles with non-intersecting chords.  Then we classify shallow
cycles and show they are enumerated by Schr\"oder numbers \cite[A006318]{oeis},
via a bijection with separable permutations. 

\begin{corollary}\label{cor:motzkin involutions}
An involution $\sigma$ is shallow if and only if $\Fundamental{\sigma}$ avoids $\VinPat{31,42}$, and shallow involutions are counted by the Motzkin numbers.
\end{corollary}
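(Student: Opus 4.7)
The plan is to invoke Theorem~\ref{thm:shallow criteria}, observe that for involutions two of the three forbidden patterns are automatically absent from $\Fundamental{\sigma}$, and then translate the remaining pattern $\VinPat{31,42}$ into the classical non-crossing condition on a chord diagram.

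The key structural fact is the following. For an involution $\sigma$, every cycle has length $1$ or $2$, and in the standard cycle representation a $2$-cycle is written as $b\,a$ with $b > a$. Consequently, in $\Fundamental{\sigma}$ every consecutive descent sits inside a single $2$-cycle, while every consecutive ascent straddles a cycle boundary, with the right element being the maximum of its cycle and therefore a left-to-right maximum of $\Fundamental{\sigma}$. I apply this to each of $\VinPat{5,24,13}$ and $\VinPat{4,25,13}$: both of their bonded pairs (``$24$'' and ``$13$'' in the first, ``$25$'' and ``$13$'' in the second) are ascents, so an occurrence in $\Fundamental{\sigma}$ would force the larger element of each bonded pair to be a left-to-right maximum. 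But in each pattern, the larger element of the first bond (the ``$4$'' or ``$5$'') exceeds the larger element of the second bond (the ``$3$'') while occurring earlier in the permutation, contradicting the left-to-right maximum requirement for the latter. Thus for involutions these two patterns never occur, and shallowness reduces to $\VinPat{31,42}$-avoidance.

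Next, I would interpret $\VinPat{31,42}$ in the involution setting. An occurrence consists of two consecutive descents $\VinPat{ab}$ and $\VinPat{cd}$ with $b < d < a < c$; by the structural fact above these descents are precisely two $2$-cycles $(a\,b)$ and $(c\,d)$ of $\sigma$. Drawing $\sigma$ as a chord diagram on the points $1, 2, \ldots, n$ placed on a circle, with fixed points left as isolated vertices and $2$-cycles drawn as chords, the inequalities $b < d < a < c$ are exactly the condition that the chords $\{a,b\}$ and $\{c,d\}$ cross. Hence $\Fundamental{\sigma}$ avoids $\VinPat{31,42}$ if and only if the chord diagram associated to $\sigma$ has no intersecting chords. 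Involutions on $[1,n]$ are in bijection with partial matchings on $n$ points, and it is a classical result that non-crossing partial matchings on $n$ points are enumerated by the $n$th Motzkin number $M_n$, completing the proof.

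The main obstacle is the first step: one has to parse the two five-element vincular patterns carefully, confirming that their bonded groups really do consist of two ascending pairs whose larger elements must be left-to-right maxima, and that the order forced by the pattern makes this impossible. Once that is dispatched, the translation of $\VinPat{31,42}$ into a chord crossing and the appeal to the Motzkin enumeration are essentially bookkeeping.
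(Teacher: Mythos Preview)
Your argument is correct and follows essentially the same route as the paper: use the structural fact that in $\Fundamental{\sigma}$ for an involution every ascent ends at a left-to-right maximum, conclude that only the $\VinPat{31,42}$ condition survives, and then identify that condition with the non-crossing chord diagram. The only difference is the starting point: the paper invokes Corollary~\ref{thm:depth vs length} and rules out the single arrow pattern $\AVinP{2,13}{2/4}$ directly, whereas you invoke Theorem~\ref{thm:shallow criteria} and rule out the two five-letter vincular patterns, which is a slightly longer but equally valid path to the same reduction.
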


\begin{proof}
Let $\sigma$ be an involution, meaning that its cycle form consists of $1$- and $2$-element cycles. Thus in $\Fundamental{\sigma}$, its image under the fundamental bijection, the descents occur between letters in the same $2$-cycle of $\sigma$, and all other values are fixed points of $\sigma$. 
Moreover, the fundamental bijection $\Fundamental{\sigma}$ 
will have no $\AVinP{2,13}{2/4}$ patterns because
after the ``$4$," we cannot have a consecutive ``$\VinPat{13}$."
Thus an involution $\sigma$ is shallow if and only if $\Fundamental{\sigma}$
avoids the pattern $\VinPat{31,42}$.

Motzkin numbers count the number of ways to draw non-intersecting chords on a
circle with $n$ points.
We can represent an involution $\sigma \in \symm_n$ as $n$ points on a circle
with chords connecting points in the same cycle in $\sigma$.
To say that $\Fundamental{\sigma}$ avoids $\VinPat{31,42}$ is equivalent to
saying that these chords are non-intersecting.
\end{proof}

We demonstrate this correspondence with two examples.

\begin{example}\label{example 4.10}
The involution $(32)(4)(51)(7)(86) = 53241876$ is shallow: its depth is $7$, its length is $11$, its reflection length is $3$, and $7 = (11+3)/2$.
The involution $(32)(4)(61)(7)(85) = 63248175$ produces 
$\VinPat{61,85} \sim \VinPat{31,42}$
in $\Fundamental{63248175} = 32461785$, and so it is not shallow. Indeed, its depth is $9$, its length is $13$, and its reflection length is $3$. As we see in Figure~$\ref{img1}$, the diagram for the first permutation (on the left) has no crossed chords, while the diagram for the second permutation (on the right) has a crossing.
\begin{figure}[htbp]
\begin{tikzpicture}
\draw (0,0) circle (1.5cm);
\node[draw=none,minimum size=3cm,regular polygon,regular polygon sides=8] (a) {};
\foreach \x in {1,...,8} {\fill (a.corner \x) circle (2pt);}
\draw (a.corner 1) -- (a.corner 5);
\draw (a.corner 6) -- (a.corner 8);
\draw (a.corner 2) -- (a.corner 3);
\node[draw=none,minimum size=3.75cm,regular polygon,regular polygon sides=8] (b) {};
\foreach \x in {1,...,8} {\draw (b.corner \x) node {$\x$};}
\end{tikzpicture}
\hspace{.5in}
\begin{tikzpicture}
\draw (0,0) circle (1.5cm);
\node[draw=none,minimum size=3cm,regular polygon,regular polygon sides=8] (a) {};
\foreach \x in {1,...,8} {\fill (a.corner \x) circle (2pt);}
\draw (a.corner 1) -- (a.corner 6);
\draw (a.corner 5) -- (a.corner 8);
\draw (a.corner 2) -- (a.corner 3);
\node[draw=none,minimum size=3.75cm,regular polygon,regular polygon sides=8] (b) {};
\foreach \x in {1,...,8} {\draw (b.corner \x) node {$\x$};}
\end{tikzpicture}
\caption{Involutions represented as circles with chords as discussed in Example~\ref{example 4.10}.
The involution on the left (with no crossed chords) is shallow, while the one on the right
(with crossed chords) is not.}\label{img1}
\end{figure}
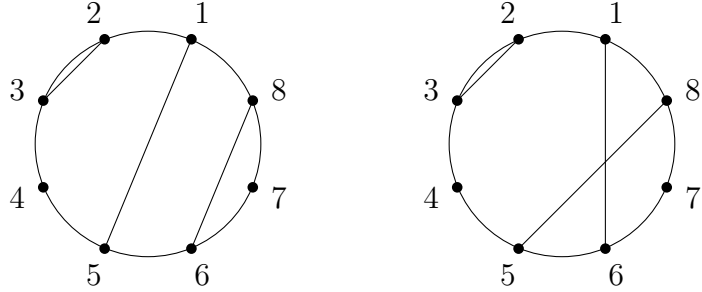
\end{example}

Having characterized and enumerated shallow involutions so succinctly, we now turn our attention to another special class of permutations: shallow cycles. To begin, we make some basic observations.

\begin{observation}\label{obs:simplify cycle pattern}
For any cycle $\sigma$,
$$\PatternCount{\AVinP{1,23}{1/4}}{\Fundamental{\sigma}} = \PatternCount{\VinPat{14,23}}{\Fundamental{\sigma}}
\text{ \ and \ }
\PatternCount{\AVinP{2,13}{2/4}}{\Fundamental{\sigma}} = \PatternCount{\VinPat{24,13}}{\Fundamental{\sigma}}.$$
\end{observation}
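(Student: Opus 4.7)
The plan is to exploit the rigid structure of $\Fundamental{\sigma}$ when $\sigma$ is a single cycle. In that case, the standard cycle representation of $\sigma$ consists of one block starting with $n$, so $\tau := \Fundamental{\sigma}$ begins with $n$ and satisfies
$$\sigma(\tau_i) = \tau_{i+1} \quad \text{for } 1 \le i \le n-1,$$
with $\sigma(\tau_n) = n = \tau_1$. In other words, applying $\sigma$ to the value in position $i$ of $\tau$ is the same as shifting one position to the right, except at the far right where it wraps around.

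For the first equality, an occurrence of $\AVinP{1,23}{1/4}$ in $\tau$ is a triple $\tau_a\tau_b\tau_{b+1}$ with $a < b$, $\tau_a < \tau_b < \tau_{b+1}$, and $\sigma(\tau_a) > \tau_{b+1}$. Since $b+1 \le n$ and $a < b$, we have $a \le n-2$, so the cycle recurrence gives $\sigma(\tau_a) = \tau_{a+1}$, and the arrow condition becomes $\tau_{a+1} > \tau_{b+1}$. The position $a+1$ is distinct from $b$ and $b+1$, since otherwise $\tau_{a+1}$ would equal $\tau_b$ or $\tau_{b+1}$, contradicting $\tau_{a+1} > \tau_{b+1} > \tau_b$. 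Therefore $\tau_a\tau_{a+1}\tau_b\tau_{b+1}$ is a $\VinPat{14,23}$-pattern at positions $a, a+1, b, b+1$. Conversely, any $\VinPat{14,23}$-occurrence in $\tau$ at positions $a, a+1, b, b+1$ produces an $\AVinP{1,23}{1/4}$-occurrence $\tau_a\tau_b\tau_{b+1}$ by simply reinterpreting $\tau_{a+1}$ as $\sigma(\tau_a)$. These maps are mutually inverse, giving a bijection.

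The second equality follows by the identical argument with the roles of the two smallest values swapped: an occurrence of $\AVinP{2,13}{2/4}$ is a triple $\tau_a\tau_b\tau_{b+1}$ with $a < b$, $\tau_b < \tau_a < \tau_{b+1}$, and $\sigma(\tau_a) > \tau_{b+1}$, which the cycle recurrence $\sigma(\tau_a) = \tau_{a+1}$ converts into a $\VinPat{24,13}$-pattern $\tau_a\tau_{a+1}\tau_b\tau_{b+1}$ satisfying $\tau_b < \tau_a < \tau_{b+1} < \tau_{a+1}$. The main (and essentially only) subtlety is the boundary behavior at $a = n$, where the cycle recurrence wraps to $\sigma(\tau_n) = n$ rather than to $\tau_{a+1}$; this case is vacuous in both equalities, however, because the existence of $\tau_{b+1}$ together with $a < b$ forces $a \le n-2$.
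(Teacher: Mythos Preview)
Your argument is correct. The paper states this result as an Observation without proof, and your proposal supplies exactly the natural justification: since $\sigma$ is a single cycle, $\Fundamental{\sigma}$ begins with $n$ and satisfies $\sigma(\tau_a)=\tau_{a+1}$ for every $a<n$, so the arrow condition $\sigma(\tau_a)>\tau_{b+1}$ is equivalent to $\tau_{a+1}>\tau_{b+1}$, converting each arrow-pattern occurrence into the corresponding bonded vincular occurrence and vice versa. Your handling of the only potential edge case ($a+1=b$, ruled out by the inequality chain) and the wrap-around at $a=n$ (vacuous since $a\le n-2$) is clean and complete.
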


This allows for considerable simplification of the results from the previous section.

\begin{corollary}\label{cor:results for cycles}
Let $\sigma \in \symm_n$ be a cycle. Then
\begin{align*}
\length(\sigma) &= n-1+ 2\left(\VinCount{2,31} + \VinCount{14,23} +\VinCount{41,32}\right)(\Fundamental{\sigma}) \text{ and}\\
\depth(\sigma) &= n-1 + \left(\VinCount{2,31}
+ \VinCount{14,23} + \VinCount{41,32} + \VinCount{24,13} + \VinCount{31,42}\right)(\Fundamental{\sigma}),\\
&= \frac{\length(\sigma) + \reflength(\sigma)}{2} + \left(\VinCount{31,42} + \VinCount{24,13}\right)(\Fundamental{\sigma}).
\end{align*}
In particular, a cycle $\tau$ is shallow if and only if $\Fundamental{\tau}$ avoids
$\VinPat{31,42}$ and $\VinPat{24,13}$.
\end{corollary}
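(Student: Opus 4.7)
The plan is to prove all three displayed formulas by substituting into Theorem~\ref{thm:length of perm}, Theorem~\ref{thm:depth of perm}, and Corollary~\ref{thm:depth vs length}, and then to read off the shallow characterization from the third formula.

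Two cycle-specific simplifications drive everything. First, an $n$-cycle $\sigma$ has exactly one cycle, so $\reflength(\sigma) = n - 1$; this substitutes directly for the ``$\reflength(\sigma)$'' terms appearing in Theorems~\ref{thm:length of perm} and~\ref{thm:depth of perm}. Second, the preceding observation rewrites the two arrow-pattern counts occurring in all three source theorems, namely $\AVinC{1,23}{1/4}$ and $\AVinC{2,13}{2/4}$, as the ordinary vincular counts $\VinCount{14,23}$ and $\VinCount{24,13}$. I would briefly justify this observation in passing: because $\sigma$ is a single cycle, $\Fundamental{\sigma}$ is the standard-form cycle with parentheses stripped, so for every position $x$ of $\Fundamental{\sigma}$ other than the last, the value immediately to its right is $\sigma(x)$. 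In any arrow-pattern occurrence $\VinPat{x,yz}$ the bonded suffix $\VinPat{yz}$ lies strictly to the right of $x$, so $x$ is not last, and the arrow $x \to \sigma(x)$ is witnessed by the value immediately after $x$; conversely, any vincular occurrence $\VinPat{xw,yz}$ of $\VinPat{14,23}$ (or of $\VinPat{24,13}$) already has the required adjacency, which forces $w = \sigma(x)$.

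With these two simplifications applied termwise, the three displayed formulas are direct consequences of Theorem~\ref{thm:length of perm}, Theorem~\ref{thm:depth of perm}, and Corollary~\ref{thm:depth vs length}, respectively. For the final assertion, the third formula writes the difference $\depth(\sigma) - (\length(\sigma)+\reflength(\sigma))/2$ as a sum of non-negative pattern counts, so a cycle $\sigma$ is shallow if and only if $\Fundamental{\sigma}$ avoids both $\VinPat{31,42}$ and $\VinPat{24,13}$. The only step requiring genuine care is the arrow-to-vincular translation, which is exactly where the single-cycle hypothesis is used; after that, the rest is direct substitution.
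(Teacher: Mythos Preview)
Your proposal is correct and matches the paper's approach exactly: the paper states the arrow-to-vincular translation as an Observation immediately preceding this corollary and then says ``this allows for considerable simplification of the results from the previous section,'' leaving the substitution into Theorems~\ref{thm:length of perm}, \ref{thm:depth of perm}, and Corollary~\ref{thm:depth vs length} (together with $\reflength(\sigma)=n-1$) implicit. Your brief justification of the Observation is a welcome addition, since the paper does not spell it out.
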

\begin{proof}
  The sum of a permutation's reflection length and number of cycles is equal to its size. Let $\sigma \in \symm_n$ be a cycle, and so $\reflength(\sigma) = n-1$.
  By Equation~\eqref{eqn: length formula} and Observation~\ref{obs:simplify cycle pattern} we have $\length(\sigma) = n-1+ 2\left(\VinCount{2,31} + \VinCount{14,23} +\VinCount{41,32}\right)(\Fundamental{\sigma})$. Thus
  $$\frac{\length(\sigma) + \reflength(\sigma)}{2} = n-1+ \left(\VinCount{2,31} + \VinCount{14,23} +\VinCount{41,32}\right)(\Fundamental{\sigma}).$$
Using this, together with Theorem~\ref{thm:depth of perm} and Observation~\ref{obs:simplify cycle pattern}, we have
  \begin{align*}
  \depth(\sigma) &= n-1 + \left(\VinCount{2,31} + \VinCount{14,23} + \VinCount{41,32} +
  \VinCount{24,13} + \VinCount{31,42}\right)(\Fundamental{\sigma})\\
&= \frac{\length(\sigma) + \reflength(\sigma)}{2} + \left(\VinCount{31,42} + \VinCount{24,13}\right)(\Fundamental{\sigma}).
\end{align*}
\end{proof}

To make a better characterization of shallow cycles, we note the
following pattern coincidence. 

\begin{definition}\label{defn:coincidence}
For two sets of patterns, $A$ and $B$, write $A \asymp B$ to indicate that $A$-avoidance is equivalent to $B$-avoidance; that is, that a permutation avoids all elements of $A$ if and only if that permutation avoids all elements of $B$. When $A \asymp B$, we say that $A$ and $B$ are \emph{coincident}.
\end{definition}

\begin{lemma}\label{lem:3142 and 2413}
$\{3142, 2413\} \asymp \{\VinPat{31,42}, \VinPat{24,13}\}$.
\end{lemma}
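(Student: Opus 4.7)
The forward direction is immediate: every vincular occurrence of $\VinPat{31,42}$ or $\VinPat{24,13}$ is in particular a classical occurrence of $3142$ or $2413$, so avoidance of the classical patterns forces avoidance of the vincular ones. For the converse I would argue by contrapositive, assuming $\tau$ contains some classical occurrence of $3142$ or $2413$ and producing one that is vincular.

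The plan is a minimality argument on occurrences. Among all classical occurrences of $3142$ or $2413$ in $\tau$ (taken together), at positions $i_1 < i_2 < i_3 < i_4$, I would select one minimizing the lexicographic triple
$$(i_4 - i_1,\ i_2 - i_1,\ i_4 - i_3),$$
which I will call $(\mathrm{span}, L, R)$. The claim is that at such a minimum, $L = R = 1$, which is exactly the vincular requirement for $\VinPat{31,42}$ or $\VinPat{24,13}$.

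Suppose $L > 1$ at the minimum. Let $x = \tau_{i_1 + 1}$. Since $x$ differs from each of the four values in the occurrence, I would split into five cases according to which open interval of the value order $x$ lies in. In each case I would pair $x$ with three of the four original entries to form a 4-element subsequence, and check that it is a classical occurrence of $3142$ or $2413$ strictly smaller in the lex order $(\mathrm{span}, L, R)$: some cases trim the span by discarding $i_3$ or $i_4$, while the remaining cases keep the span but replace $i_2$ by $i_1+1$, dropping $L$ to $1$. Either outcome contradicts the choice of minimum, so $L=1$; an analogous argument at the right end using $y = \tau_{i_4-1}$ then forces $R = 1$.

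The main obstacle will be the bookkeeping. There are two patterns ($3142$ and $2413$), two ends ($L$- and $R$-reduction), and five value-intervals in each, for roughly twenty subcases in total. Each is only a routine rank check on a four-element subsequence, and the work can be compressed by symmetry: complementation exchanges $3142 \leftrightarrow 2413$, and reversal exchanges the left- and right-end reductions, so only a handful of genuinely distinct cases remain. Presenting them cleanly, without redundancy, is where the real effort lies.
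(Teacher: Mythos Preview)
Your approach is correct and close in spirit to the paper's, but organized differently. The paper factors the argument through an intermediate set $Q=\{\VinPat{31,4,2},\VinPat{24,1,3}\}$ with only the left pair bonded, proving $\{3142,2413\}\asymp Q$ and then $Q\asymp\{\VinPat{31,42},\VinPat{24,13}\}$; each half is a short maximality/minimality argument looking at the element adjacent to one endpoint. You instead run a single minimality argument on the lexicographic triple $(\mathrm{span},L,R)$, handling both adjacencies at once. The trade-off is that your version is more direct but carries all the subcases simultaneously, whereas the paper's two-stage route keeps each step lighter. One small correction to your sketch: in the $L$-reduction for a $3142$ occurrence, the span-trimming cases sometimes discard $i_1$ rather than $i_3$ or $i_4$ (e.g.\ when $\tau_{i_1+1}$ lies strictly between the ``2'' and ``4'' values, the subsequence at $i_1{+}1,i_2,i_3,i_4$ is again a $3142$ with strictly smaller span). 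This does not affect the validity of the plan, only the bookkeeping you already flagged.
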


\begin{proof}
We first show that $P:= \{3142, 2413\}$ is coincident to $Q:=\{\VinPat{31,4,2}, \VinPat{24,1,3}\}$. Certainly if a permutation avoids all $P$-patterns then it also avoids all $Q$-patterns. Now suppose that $\sigma$ avoids all $Q$-patterns, but has a $3142$-pattern in positions $i_1 < i_2 < i_3 < i_4$. Then it must be that $i_2 > i_1 + 1$. Let us choose $i_1$ to be maximal relative to $i_2$, and consider $\sigma_{i_1+1}$. To avoid a $\VinPat{31,4,2}$-pattern, we must have $\sigma_{i_1+1} > \sigma_{i_4}$. By maximality of $i_1$, we must also have $\sigma_{i_1+1} > \sigma_{i_3}$. But then $\sigma$ has a $\VinPat{24,1,3}$-pattern in positions $\{i_1, i_1+1, i_2, i_3\}$, which is a contradiction. Thus $\sigma$ must avoid $3142$, and a similar argument shows that it avoids $2413$. Hence $P \asymp Q$.

We will now show that $Q$ is coincident to $R := \{\VinPat{31,42}, \VinPat{24,13}\}$, and the result will follow by transitivity of $\asymp$.

As before, if $\sigma$ avoids all $Q$-patterns, then it necessarily avoids all $R$ patterns. Now consider $\sigma$ avoiding all $R$-patterns and suppose that $\sigma$ has a $\VinPat{31,4,2}$-pattern in positions $i_1 < i_2 < i_3 < i_4$, with $i_2 = i_1+1$. Then we must have $i_4 > i_3 + 1$. Choose this occurrence of $\VinPat{31,4,2}$ so that $i_4 - i_3$ is minimal. Because of this minimality, and because $\sigma$ avoids $\VinPat{31,42}$, we must have $\sigma_j < \sigma_2$ for $j \in \{i_3 + 1, i_4 - 1\}$. In other words, there is a $\VinPat{42,51,3}$-pattern in positions $\{i_1, i_2, i_3, i_3+1, i_4\}$ of $\sigma$, and a $\VinPat{42,5,13}$-pattern in positions $\{i_1, i_2, i_3, i_4-1, i_4\}$. Because $\sigma$ avoids all $R$-patterns, we must have $\sigma_{i_3+1} < \sigma_{i_2+1} < \sigma_{i_4}$. If $\sigma_{i_2+1} < \sigma_{i_4-1}$, then $\sigma$ will have a $\VinPat{31,4,2}$-pattern in positions $\{i_2,i_2+1,i_3,i_4-1\}$, violating minimality of $i_4-i_3$. Thus $\sigma_{i_4-1} < \sigma_{i_2+1}$. In fact, a similar analysis shows that $\max\{\sigma_{i_3+1}, \sigma_{i_4-1}\} < \sigma_j < \sigma_{i_4}$ for all $j \in (i_2,i_3)$. But then $\sigma$ has a $\VinPat{24,13}$-pattern in positions $\{i_3-1, i_3, i_4-1, i_4\}$, which is a contradiction. Thus $\sigma$ must avoid $\VinPat{31,4,2}$, and a similar argument shows that it avoids $\VinPat{24,1,3}$. Hence $Q \asymp R$.
\end{proof}

We can now characterize and enumerate shallow cycle permutations. An alternative proof of this result may also be deduced from \cite[Theorem 1.1]{cornwell mcnew} together with \cite[Theorem 1.1]{woo}, the latter of which in preparation when our work was first posted to the arXiv.

\begin{corollary}\label{cor:count shallow cycles}
There is a bijection between shallow cycle permutations in $\symm_n$ and separable permutations in $\symm_{n-1}$, and they are enumerated by the $(n-1)$st large Schr\"{o}der number.
\end{corollary}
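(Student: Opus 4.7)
The plan is to assemble three ingredients already in hand and chain them together. First, Corollary~\ref{cor:results for cycles} says that a cycle $\sigma \in \symm_n$ is shallow if and only if $\Fundamental{\sigma}$ avoids $\VinPat{31,42}$ and $\VinPat{24,13}$. Second, Lemma~\ref{lem:3142 and 2413} says that avoiding those two vincular patterns is equivalent to avoiding the classical patterns $3142$ and $2413$. Third, separable permutations are, by definition, exactly those permutations avoiding $\{2413, 3142\}$, and separable permutations in $\symm_{n-1}$ are counted by the $(n-1)$st large Schr\"oder number.

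With those in hand, the remaining task is to produce the bijection. The key structural observation is about the image of cycles under the fundamental bijection: since the standard cycle representation writes each cycle with its largest element in the leftmost position, a single $n$-cycle in $\symm_n$ always has $n$ at the very start of its unique cycle. Conversely, a one-line word is the fundamental-bijection image of a cycle exactly when it has a single left-to-right maximum, that is, when it starts with $n$. Thus $\sigma \mapsto \Fundamental{\sigma}$ restricts to a bijection between cycles in $\symm_n$ and words in $\symm_n$ beginning with $n$, and stripping off the initial $n$ then yields a bijection from cycles in $\symm_n$ to permutations $\tau \in \symm_{n-1}$ via $\Fundamental{\sigma} = n\tau_1\tau_2\cdots\tau_{n-1}$.

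The last point to verify is that this bijection carries shallow cycles onto separable permutations, i.e., that $\Fundamental{\sigma}$ avoids $\{3142, 2413\}$ if and only if $\tau$ does. Any occurrence of $3142$ or $2413$ in $n\tau$ must lie entirely inside $\tau$, because the letter $n$ is both the first letter and the largest letter of $\Fundamental{\sigma}$, whereas in each of the patterns $3142$ and $2413$ the largest symbol appears in the second position, not the first. Consequently $n$ cannot play any role in such an occurrence, and the pattern avoidance transfers.

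I don't expect a serious obstacle here: each step reduces to a prior result or to a one-line check. The only substantive piece beyond cited material is the remark that the leading $n$ in $\Fundamental{\sigma}$ is inert with respect to the patterns $3142$ and $2413$, and that follows at sight from where the maximum sits in each of those two patterns. Combining with the Schr\"oder enumeration of separable permutations then gives both the bijection and the count.
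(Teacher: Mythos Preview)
Your proof is correct and follows essentially the same approach as the paper's, invoking Corollary~\ref{cor:results for cycles}, Lemma~\ref{lem:3142 and 2413}, and the observation that cycles correspond under $\Phi$ to words beginning with $n$; in fact, you give more justification than the paper for why the leading $n$ cannot participate in a $3142$- or $2413$-occurrence. One small slip: in the pattern $3142$ the largest symbol sits in the third position, not the second, but your argument only needs that it is not in the first position, which holds for both patterns.
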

\begin{proof}
Let $\sigma \in \symm_n$ be a cycle. Corollary~\ref{cor:results for cycles} says that $\sigma$ is shallow if and only if $(\VinCount{31,42} + \VinCount{24,13})(\Fundamental{\sigma}) = 0$. By Lemma~\ref{lem:3142 and 2413}, this is equivalent to $\Fundamental{\sigma}$ avoiding both $3142$ and $2413$, meaning that $\Fundamental{\sigma}$ is \emph{separable}. 
Because $\sigma$ is a cycle, the leftmost letter in the one-line notation for $\Fundamental{\sigma}$ is $n$, and the rest of the word can be any separable permutation in $\symm_{n-1}$. These are enumerated by the large Schr\"{o}der numbers \cite[A006318]{oeis}.
\end{proof}

It is worth noting that the proof of this corollary goes beyond counting shallow $n$-cycles
to clearly describe their structure.
Let $\tau$ be the cycle $(n\ 1\ 2\ \cdots\ n-1)$ in $\symm_n$.
A cycle $\sigma \in \symm_n$ is shallow if and only if there exists a separable permutation
$\pi \in \symm_{n-1}$ such that $\sigma = \pi^{-1} \tau \pi$.

\section{Directions for further research}\label{sec:future}

The goal of this paper was to explore pattern-functions for permutation statistics and to demonstrate their utility by characterizing shallow permutations. Given the success of this approach, it seems likely that other quantities can also be written as pattern-functions, and perhaps other open questions can be similarly resolved. We have also used pattern-functions in this work to enumerate interesting classes of permutations. Those efforts, too, suggest a broader category of problems that can benefit from the perspective of pattern-functions.

\section*{Acknowledgements}

We are grateful for thoughtful comments and suggestions from the anonymous referees.

\end{document}